\newcommand{\cut}[1]{{}}
\newcommand{\va}{{\mathbf{a}}}
\newcommand{\vb}{{\mathbf{b}}}
\newcommand{\vd}{{\mathbf{d}}}
\newcommand{\vp}{{\mathbf{p}}}
\newcommand{\vq}{{\mathbf{q}}}
\newcommand{\vw}{{\mathbf{w}}}
\newcommand{\vx}{{\mathbf{x}}}
\newcommand{\vy}{{\mathbf{y}}}
\newcommand{\vz}{{\mathbf{z}}}
\newcommand{\vA}{{\mathbf{A}}}
\newcommand{\vB}{{\mathbf{B}}}
\newcommand{\vI}{{\mathbf{I}}}
\newcommand{\vL}{{\mathbf{L}}}
\newcommand{\vM}{{\mathbf{M}}}
\newcommand{\vQ}{{\mathbf{Q}}}
\newcommand{\vS}{{\mathbf{S}}}
\newcommand{\vU}{{\mathbf{U}}}
\newcommand{\vW}{{\mathbf{W}}}
\newcommand{\cE}{{\mathcal{E}}}
\newcommand{\cG}{{\mathcal{G}}}
\newcommand{\cN}{{\mathcal{N}}}
\newcommand{\cU}{{\mathcal{U}}}
\newcommand{\cV}{{\mathcal{V}}}
\newcommand{\RR}{\mathbb{R}}
\newcommand{\vzero}{\mathbf{0}}
\newcommand{\vone}{{\mathbf{1}}}
\newcommand{\st}{{\text{s.t.}}} 
\newcommand{\Diag}{{\mathrm{Diag}}} 
\newcommand{\range}{{\mathbf{range}}} 
\newcommand{\tr}{{\mathrm{tr}}} 
\newcommand{\prox}{\mathbf{prox}}
\newcommand{\Null}{\mathbf{Null}}
\newcommand{\Span}{\mathbf{span}}
\let\@@span\span
\def\sp@n{\@@span\omit\advance\@multicnt\m@ne}
\DeclareMathOperator*{\argmin}{arg\,min}
\DeclareMathOperator*{\Min}{minimize}
\newcommand{\bc}{\begin{center}}
\newcommand{\ec}{\end{center}}
\newcommand{\bdm}{\begin{displaymath}}
\newcommand{\edm}{\end{displaymath}}
\newcommand{\beq}{\begin{equation}}
\newcommand{\eeq}{\end{equation}}
\newcommand{\bfl}{\begin{flushleft}}
\newcommand{\efl}{\end{flushleft}}
\newcommand{\bt}{\begin{tabbing}}
\newcommand{\et}{\end{tabbing}}
\newcommand{\beqn}{\begin{align}}
\newcommand{\eeqn}{\end{align}}
\newcommand{\beqs}{\begin{align*}} 
\newcommand{\eeqs}{\end{align*}}  
\newtheorem{theorem}{Theorem}
\newtheorem{assumption}{Assumption}
\newtheorem{lemma}{Lemma}
\newtheorem{proposition}{Proposition}
\definecolor{darkblue}{rgb}{0.0,0.0,0.6}
\definecolor{mypink1}{rgb}{0.858, 0.188, 0.478}
\newtheorem{remark}{Remark}
\begin{document}

\newcommand{\TheAuthors}{Zhi Li, Wei Shi, and Ming Yan}
\newcommand{\Lap}{\textbf{{\L}}}

\title{A Decentralized Proximal-Gradient Method with Network Independent Step-sizes and Separated Convergence Rates}


\author{Zhi~Li,
        Wei~Shi,
        and~Ming~Yan
\thanks{Zhi Li is with Department of Computational Mathematics, Science and Engineering, Michigan State University, East Lansing, MI, 48824, USA. (zhili@msu.edu)}
\thanks{Wei Shi was with 
Department of Electrical Engineering, Princeton University, Princeton, NJ 08544, USA.}

\thanks{Ming Yan is with Department of Computational Mathematics, Science and Engineering and Department of Mathematics, Michigan State University, East Lansing, MI, 48824, USA. (myan@msu.edu)}
\thanks{This work is supported in part by the NSF grant DMS-1621798.}}

%
%

\markboth{Journal of TSP \LaTeX\ Class Files,~Vol.~x, No.~x, Sep~2018}%
{Shell \MakeLowercase{\textit{et al.}}: Bare Demo of IEEEtran.cls for IEEE Journals}
%



\maketitle

\begin{abstract}
This paper proposes a novel proximal-gradient algorithm for a decentralized optimization problem with a composite objective containing smooth and non-smooth terms.
Specifically, the smooth and nonsmooth terms are dealt with by gradient and proximal updates, respectively. 
The proposed algorithm is closely related to a previous algorithm, PG-EXTRA \cite{shi2015proximal}, but has a few advantages. 
First of all, agents use uncoordinated step-sizes, and the stable upper bounds on step-sizes are independent of network topologies. 
The step-sizes depend on local objective functions, and they can be as large as those of the gradient descent. 
Secondly, for the special case without non-smooth terms, linear convergence can be achieved under the strong convexity assumption.  
The dependence of the convergence rate on the objective functions and the network are separated, and the convergence rate of the new algorithm is as good as one of the two convergence rates that match the typical rates for the general gradient descent and the consensus averaging. 
We provide numerical experiments to demonstrate the efficacy of the introduced algorithm and validate our theoretical discoveries.
\end{abstract}

\begin{IEEEkeywords}
decentralized optimization, proximal-gradient, convergence rates, network independent
\end{IEEEkeywords}

%
\IEEEpeerreviewmaketitle

\section{Introduction}\label{sec:intro}

\IEEEPARstart{T}{his} paper focuses on the following decentralized optimization problem:
\begin{equation} \label{eq:F}
\Min\limits_{x\in\RR^p}~\bar f(x):=\frac{1}{n}\sum\limits_{i=1}^n (s_i(x)+r_i(x)),
\end{equation}
where $s_i:\RR^p\rightarrow \RR$ and $r_i: \RR^p\rightarrow \RR\cup\{+\infty\}$ are two lower semi-continuous proper convex functions held privately by agent $i$ to encode the agent's objective. 
We assume that one function (e.g., without loss of generality, $s_i$) is differentiable and has a Lipschitz continuous gradient with parameter $L>0$, and the other function $r_i$ is proximable, i.e., its proximal mapping 
$$   \prox_{\lambda r_i}(y) = \argmin_{x\in \RR^p}~\lambda r_i(x) + {1\over2}\|x-y\|^2,$$
has a closed-form solution or can be computed easily. 
Examples of $s_i$ include linear functions, quadratic functions, and logistic functions, while $r_i$ could be the $\ell_1$ norm, 1D total variation, or indicator functions of simple convex sets. 
In addition, we assume that the agents are connected through a fixed bi-directional communication network. 
Every agent in the network wants to obtain an optimal solution of~\eqref{eq:F} while it can only receive/send nonsensitive messages\footnote{We believe that agent $i$'s instantaneous estimation on the optimal solution is not a piece of sensitive information but $s_i$ and $r_i$ are.} from/to its immediate neighbors. 

Specific problems of form~\eqref{eq:F} that require a decentralized computing architecture have appeared in various areas including networked multi-vehicle coordination, distributed information processing and decision making in sensor networks, as well as distributed estimation and learning. 
Some examples include distributed average consensus~\cite{Xiao2007,Cai2014,Olshevsky2014}, distributed spectrum sensing~\cite{Bazerque2010}, information control~\cite{Ren2006,Olshevsky2010}, power systems control~\cite{Ram2009,Gan2013}, statistical inference and learning~\cite{Rabbat2004,Forero2010,nedic2017fast}. 
In general, decentralized optimization fits the scenarios where the data is collected and/or stored in a distributed network, a fusion center is either inapplicable or unaffordable, and/or computing is required to be performed in a distributed but collaborative manner by multiple agents or by network designers.

\subsection{Literature Review}
The study on distributed algorithms dates back to the early 1980s~\cite{Bertsekas1983,Tsitsiklis1986}. 
Since then, due to the emergence of large-scale networks, decentralized (optimization) algorithms, as a special type of distributed algorithms for solving problem~\eqref{eq:F}, have received  attention. 
Many efforts have been made on star networks with one master agent and multiple slave agents~\cite{Boyd2011,Cevher2014}. 
This scheme is ``centralized'' due to the use of a ``master'' agent. 
It may suffer a single point of failure and may violate the privacy requirement in certain applications. 
In this paper, we focus on solving~\eqref{eq:F} in a decentralized fashion, where no ``master'' agent is used. 

Incremental algorithms~\cite{Nedic2000,Nedic2001a,Nedic2001b,Ram2009_2,Bertsekas2011,WangB2013} can solve~\eqref{eq:F} without the need of a ``master'' agent and it is based on a directed ring network.  
To handle general (possibly time-varying) networks, the distributed sub-gradient algorithm was proposed in~\cite{Nedic2009}. 
This algorithm and its variants~\cite{Ram2010,Nedic2011} are intuitive and simple but usually slow due to the diminishing step-size that is needed to obtain a consensual and optimal solution, even if the objective functions are differentiable and strongly convex. 
With a fixed step-size, these distributed methods can be fast, but they only converge to a neighborhood of the solution set which depends on the step-size. 
This phenomenon creates an exactness-speed dilemma~\cite{Kun2014}. 

A class of distributed approaches that bypass this dilemma is based on introducing the Lagrangian dual. 
The resulting algorithms include distributed dual decomposition~\cite{Terelius2011} and decentralized alternating direction method of multipliers (ADMM)~\cite{Bertsekas1997}. 
The decentralized ADMM and its proximal-gradient variant can employ a fixed step-size to achieve $O(1/k)$ rate under general convexity assumptions~\cite{Wei2013,Chang2014,Hong2015}. 
Under the strong convexity assumption, the decentralized ADMM has linear convergence for time-invariant undirected graphs~\cite{Shi2014}. 
There exist some other distributed methods that do not (explicitly) use dual variables but can still converge to an optimal consensual solution with fixed step-sizes. 
In particular, works in~\cite{Chen2012_2,Jakovetic2014} employ multi-consensus inner loops, Nesterov's acceleration, and/or the adapt-then-combine (ATC) strategy. 
Under the assumption that the objectives have bounded and Lipschitz continuous gradients\footnote{This means that the nonsmooth terms $r_i$'s are absent. Such assumption is much stronger than the one used for achieving the $O(1/k^2)$ rate in Nesterov's optimal gradient method~\cite{nesterov2013introductory}.}, the algorithm proposed in~\cite{Jakovetic2014} has $O\left(\ln(k)/k^2\right)$ rate. 
References~\cite{Shi2015,shi2015proximal} use a difference structure to cancel the steady state error in decentralized gradient descent~\cite{Nedic2009,Kun2014}, thereby developing the algorithm EXTRA and its proximal-gradient variant PG-EXTRA. 
It converges at an $O(1/k)$ rate when the objective function in~\eqref{eq:F} is convex and has a linear convergence rate when the objective function is strongly convex and $r_i(x)=0$ for all $i$.

A number of recent works employed the so-called gradient tracking~\cite{Zhu2010} to conquer different issues~\cite{Xu2015,Lorenzo2016,Qu2016,nedich2016achieving,nedic2016geometrically}. 
To be specific, works~\cite{Xu2015,nedic2016geometrically} relax the step-size rule to allow uncoordinated step-sizes across agents. 
Paper~\cite{Lorenzo2016} solves non-convex optimization problems. 
Paper~\cite{nedich2016achieving} aims at achieving geometric convergence over time-varying graphs. 
Work~\cite{Qu2016} improves the convergence rate over EXTRA, and its formulation is the same as that in~\cite{nedic2016geometrically}. 

Another topic of interest is decentralized optimization over directed graphs~\cite{Nedic2013,Xi2015,Zeng2015,sun2016distributed,nedich2016achieving}, which is beyond the scope of this paper. 

\subsection{Proposed Algorithm and its Advantages}
To proceed, let us introduce some basic notation first. 
Agent $i$ holds a local variable $x_i \in \RR^{p}$, and we denote $x_i^k$ as its value at the $k$-th iteration. 
Then, we introduce a new function that is the average of all the local functions with local variables as
\begin{equation}
{f}(\vx):=\frac{1}{n}\sum_{i=1}^{n}(s_{i}(x_i)+r_i(x_i)),
\end{equation}
where 
\begin{equation}\label{eq:define_x}
\vx:=
\begin{pmatrix}
	-& x_1^\top &- \\ 
-	& x_2^\top &- \\ 
	& \vdots & \\ 
	-& x_n^\top &- 
\end{pmatrix}
\in \RR^{n\times p}.
\end{equation}
If all local variables are identical, i.e., $x_{1}=\dots=x_{n}$, we say that $\vx$ is consensual.
In addition, we define
\begin{align}\label{eq:define_sr}
{s}(\vx):=\frac{1}{n}\sum_{i=1}^{n}s_{i}(x_i), \quad
{r}(\vx):=\frac{1}{n}\sum_{i=1}^{n}r_{i}(x_i).
\end{align}
We have $f(\vx)=s(\vx)+r(\vx)$. 
The gradient of $s$ at $\vx$ is given in the same way as $\vx$ in~\eqref{eq:define_x} by 
\begin{equation}
\nabla s (\vx):=
\begin{pmatrix}
-& \left(\nabla s_{1}(x_{1})\right)^{\top} &- \\ 
-	& \left(\nabla s_{2}(x_{2})\right)^{\top} &- \\ 
& \vdots & \\ 
-& \left(\nabla s_{n}(x_{n})\right)^{\top} &- 
\end{pmatrix}
\in \RR^{n\times p}.
\end{equation}

\footnotetext{In the original EXTRA, two mixing matrices $\vW$ and $\widetilde{\vW}$ are used. For simplicity, we let $\widetilde{\vW}=\frac{I+\vW}{2}$ here.}

By making a simple modification over PG-EXTRA~\cite{shi2015proximal}, our proposed algorithm brings a big improvement in the speed and the dependency of convergence over networks. 
To better expose this simple modification, let us compare a special case of our proposed algorithm with EXTRA for the smooth case, i.e., $r(\vx)=0$. 
\begin{subequations}\label{eq:compare1}
\begin{align}
\begin{split}
(\textnormal{EXTRA}\footnotemark)\ \vx^{k+2} = {\vI+\vW\over 2} (2\vx^{k+1} - \vx^k)  \\
- \alpha \nabla s(\vx^{k+1}) +\alpha \nabla s(\vx^k),\\
\end{split}\\
\begin{split}
(\textnormal{Proposed NIDS})\ \vx^{k+2} =  {\vI+\vW\over 2} (2\vx^{k+1} - \vx^k  \\
- \alpha \nabla s(\vx^{k+1}) +\alpha \nabla s(\vx^k)). 
\end{split}\label{eq:compare1b}
\end{align}
\end{subequations}
Here, $\vW\in\RR^{n\times n}$ is a  matrix that represents information exchange between neighboring agents (more details about this matrix are in Assumption~\ref{assu:1}) and $\alpha$ is the step-size. 
The only difference between EXTRA and the proposed algorithm is the information exchanged between the agents. 
EXTRA exchanges only the estimations $2\vx^{k+1}-\vx^k$, while the proposed algorithm exchanges the gradient adapted estimations, i.e., $2\vx^{k+1} - \vx^k  - \alpha \nabla s(\vx^{k+1})+\alpha \nabla s(\vx^k)$.
Because of this small modification, the proposed algorithm has a \underline{N}etwork \underline{I}n\underline{D}ependent \underline{S}tep-size, which will be explained later. Therefore we name the proposed algorithm NIDS and will use this abbreviation throughout the paper. For the nonsmooth case, more detailed comparison between PG-EXTRA and NIDS will be given in Section~\ref{sec:alg}.




\begin{table*}[!ht]
\centering
\caption{ Summary of algorithmic parameters used in EXTRA and NIDS. The step size bound on $\alpha$ for EXTRA comes from reference \cite{li2017aprimal} which improves that given in reference~\cite{Shi2015}.\label{tab:comp_EXTRA_NIDS} 
}
		\vspace{-0.5em}
		\hspace{-4.7em}\begin{tabular}{c|c|c|c|c} 
			\hline\hline
			Method& $\alpha$ or $\alpha_i$ & $c$ \\
			\hline\hline
			EXTRA ($\lambda_{n}(\vW)$ given)&$\alpha<(5+3 \lambda_{n}(\vW))/(4\max_i L_i)$&--\\
			\hline
			{EXTRA ($\lambda_{n}(\vW)$ not given)}&${\alpha<1/(2\max_i L_i)}$&--\\			
			\hline
			NIDS ($\lambda_{n}(\vW)$ given)&$\alpha_i<2/L_i$&$c\leq 1/ (( 1-\lambda_n(\vW))\max_i{\alpha_i})$\\
			\hline            
			NIDS ($\lambda_{n}(\vW)$ not given)&$\alpha_i<2/L_i$&$ c\leq {1/ (2\max_i{\alpha_i})}$\\
			\hline
		\end{tabular}
\end{table*}

{\bf A large and network independent step-size for NIDS:} 
All works mentioned above either employ pessimistic step-sizes or have network dependent upper bounds on step-sizes. 
Furthermore, the step-sizes for the strongly convex case are more conservative.
For example, the step-size used to achieve linear convergence rates for EXTRA in~\cite{Shi2015,2019arXiv190401196A} is in the order of $O(\mu/L^2)$, where $\mu$ and $L$ are the strong convexity constant of $s(\vx)$ and Lipschitz constant of $\nabla s(\vx)$, respectively. 
As a contrast, the centralized gradient descent can choose a step-size in the order of $O(1/L)$.
The upper bound of step-size for EXTRA was recently improved to $(5+3\lambda_n(\vW))/(4L)$ in~\cite{li2017aprimal}. 
We can choose $1/(2L)$ for any $\vW$ satisfying Assumption~\ref{assu:1}.
Another example of employing a constant step-size in distributed optimization is DIGing~\cite{nedich2016achieving}. 
Although its ATC variant in~\cite{nedic2016geometrically} was shown to converge faster than DIGing, the step-size is still very conservative compared to $O(1/L)$. 
We will show that the step-size of NIDS can have the same upper bound $2/L$ as that of the centralized gradient descent. 
The achievable step-sizes of NIDS for $o(1/k)$ rate in the general convex case and the linear convergence rate in the strongly convex case are both in the order of $O(1/L)$. 
Furthermore, NIDS allows each agent to have an individual step-size. 
Each agent $i$ can choose a step-size $\alpha_i$ that is as large as $2/L_i$ on any connected network, where $L_i$ is the Lipschitz constant of $\nabla s_i(x)$ and $L_i\leq L$ for all $i$ ($L=\max_iL_i$). 
Apart from the step-sizes, to run NIDS, a common/public parameter $c$ is needed for the construction of $\widetilde{\vW}$ (see~\eqref{for:alg1} for the algorithm and $\widetilde{\vW}$). 
This parameter $c$ can be chosen without any knowledge of the network (or the mixing matrix $\vW$). For example, $c={1}/({2\max_i\alpha_i})$. Table~\ref{tab:comp_EXTRA_NIDS} provides an overview of algorithmic configurations for EXTRA and NIDS. NIDS works as long as each agent can estimate its local functional parameter: No agent needs any other global information including the number of agents in the whole network except the largest step-size, if it is not the same for all agents.

In the line of research of optimization over heterogeneous networks, after the initial work~\cite{Xu2015} regarding uncoordinated step sizes, references~\cite{yuan2017exact1} and~\cite{yuan2017exact2} introduce and analyze a diffusion strategy with corrections that achieves an exact linear convergence with uncoordinated step sizes. This exact diffusion algorithm is still related to the Lagrangian method but can be considered as having incorporated a CTA structure. The CTA strategy can, similar to the ATC strategy, improve the convergence speed of certain consensus optimization algorithms (see~\cite[Remark 3]{nedich2016achieving} and~\cite[Section II.C]{sun2016distributed}). However, the analysis in~\cite{yuan2017exact2}, though allowing step size mismatch across the network, does not take into consideration the heterogeneity of agents' functional conditions. 
Furthermore, their upper bound for the step-size is in the order of $O(\mu/L^2)$.



{\bf Sublinear convergence rate for the general case:}
Under the general convexity assumption, we show that NIDS has a convergence rate of $o(1/k)$, which is slightly better than the $O(1/k)$ rate of PG-EXTRA. 
Because the step-size of NIDS does not depend on the network topology and is much larger than that of PG-EXTRA, NIDS can be much faster than PG-EXTRA, as shown in the numerical experiments. 

{\bf Linear convergence rate for the strongly convex case:} 
Let us first define ``scalability''. 
When the iterate $x^k$ of an algorithm converges to the optimal solution $x^*$ linearly, i.e., $\|x^k-x^*\|^2=O((1-1/S)^k)$ with some positive constant $S$, we say that the algorithm needs to run $O(S) \log(1/\epsilon)$ iterations to reach $\epsilon$-accuracy. 
So we call $O(S)$ the scalability of the algorithm.

For the case where the non-smooth terms are absent and the functions $\{s_i\}_{i=1}^n$ are strongly convex, we show that NIDS achieves a linear convergence rate whose dependencies on the functions $\{s_i\}_{i=1}^n$ and the network topology are decoupled. 
To be specific, to reach $\epsilon$-accuracy, the number of iterations needed for NIDS is
\[O\left(\max\left(\frac{L}{\mu},\frac{1-\lambda_n(\vW)}{1-\lambda_2(\vW)}\right)\right)\log\frac{1}{\epsilon},\] 
where $\lambda_i(\vW)$ is the $i$th largest eigenvalue of $\vW$. 
Both $\frac{L}{\mu}$ and $\frac{1-\lambda_n(\vW)}{1-\lambda_2(\vW)}$ are typical in the literature of optimization and average consensus, respectively. 
The value $\frac{L}{\mu}$, also called the condition number of the objective function, is aligned with the scalability of the standard gradient descent~\cite{nesterov2013introductory}. 
The value $\frac{1-\lambda_n(\vW)}{1-\lambda_2(\vW)}$ is understood as the condition number\footnote{When we choose $\vW=\vI-\tau\textbf{\L}$ where $\textbf{\L}$ is the Laplacian of the underlying graph and $\tau$ is a positive tunable constant, we have $\frac{1-\lambda_n(\vW)}{1-\lambda_2(\vW)}=\frac{\lambda_1(\textbf{\L})}{\lambda_{n-1}(\textbf{\L})}$ which is the finite condition number of $\textbf{\L}$. Note that $\lambda_{n}(\textbf{\L})=0$.} of the network and aligned with the scalability of the simplest linear iterations for distributed averaging~\cite{nedic2009distributed}.

Separating the condition numbers of the objective function and the network provides a way to determine the bottleneck of NIDS for a specific problem and a given network. 
Therefore, the system designer might be able to smartly apply preconditioning on $\{s_i\}_{i=1}^n$ or improve the connectivity of the network to cost-effectively obtain a better convergence. 

\begin{table*}[!t]
\centering
	\caption{Summary of a few relevant algorithms. 
	Here, $\mu$ (or $\mu_g$ or $\bar{\mu}$) is the strong convexity constant of the objective function (or that of a modified objective function); $L$ (or $\bar{L}$) is the Lipschitz constant of the objective gradient (or its modified version). 
	Also $\sigma=\frac{1-\lambda_n(\vW)}{1-\lambda_2(\vW)}$ is considered as the condition number of the network, which scales at the order of $O(n^2)$ in the worst case scenario. 
	We omit ``$O(\cdot)$'' in ``Orders of step-size bounds'' and ``Scalability'' for brevity. 
	Note quantities involving $K$ only hold for a  finite $K$.}\label{tab: sumup}
\begin{tabular}{c|c|c|c|c|c} 
		\hline\hline
		Algorithm &Support &Orders of step-size bounds& Uncoordinated&Scalability&Rate\\
		\ &prox. operators&(strongly convex; convex)&step-size&(strongly convex)&(convex) \\
		\hline\hline
EXTRA&&&&&\\		\cite{Shi2015,shi2015proximal}&Yes&$\frac{\mu_g}{L^2}$; $\frac{1}{L}$&No&$\left(\frac{L}{\mu_g}\right)^2$&$o\left(\frac{1}{k}\right)$\\
		\hline
        Aug-DGM&&&&&\\
		\cite{Xu2015}&No&small enough&Yes& -- & converges\\
		\hline
Harness&&&&&\\	\cite{Qu2016}&No&$\frac{\mu}{L^2\sigma^2}$; $\frac{1}{L\sigma^2}$&No&$\left(\frac{L}{\mu}\sigma\right)^2$&$O\left(\frac{1}{k}\right)$\\
		\hline
Acc-DNGD&&$\frac{(\sigma-1)^3}{\sigma^6 L}\left(\frac{\mu}{L}\right)^{3/7}$;&&&\\		\cite{qu2017accelerated}&No& $\frac{\min\{(1-\sigma^{-1})^2,(\sigma)^{-3}\}}{Lk^{0.6}}$&No&$\frac{\sigma^3}{(\sigma-1)^{1.5}}\left(\frac{L}{\mu}\right)^{5/7}$&$O\left(\frac{1}{k^{1.4}}\right)$\\
		\hline
DIGing&&&&&\\			\cite{nedich2016achieving,nedic2016geometrically}&No&$\min\{\frac{\bar{\mu}^{0.5}}{\sigma(\sigma-1)L^{1.5}n^{0.5}},\frac{1}{\bar{L}}\}$; --&Yes&$\max\{\frac{L}{\bar{\mu}},\frac{(\sigma-1)^2n^{0.5}L^{1.5}+\sigma^2\bar{\mu}^{1.5}}{\bar{\mu}^{1.5}}\}$&-- \\
		\hline
Optimal&&&&&\\		\cite{scaman2017optimal,uribe2017optimal}&No&$\mu$; $\frac{L\sigma}{K^2}$&No&$\left(\frac{L}{\mu}\sigma\right)^{0.5}$&$O\left(\frac{1}{K^2}\right)$\\
		\hline
		NIDS&Yes&$\frac{1}{L}$; $\frac{1}{L}$&Yes&$\max\{\frac{L}{\mu},\sigma\}$&$o(\frac{1}{k})$\\
		\hline
		\hline
	\end{tabular}
\end{table*}

{\bf Summary and comparison of state-of-the-art algorithms:}
We list the properties of a few relevant algorithms in Table~\ref{tab: sumup}. 
We let $\sigma:=\frac{1-\lambda_n(\vW)}{1-\lambda_2(\vW)}$. 
This quantity is directly affected by the network topology and how the matrix $\vW$ is defined, thus is also related to the consensus ability of a network. 
When the network is fully connected (a complete graph), we can choose $\vW$ so that $\lambda_2(\vW)=\lambda_n(\vW)=0$ and thus $\sigma=1$ (the best case); in general $\sigma\geq1$ since $0<1-\lambda_2(\vW)\leq 1-\lambda_n(\vW)<2$; in the worst case, we have $\sigma\leq\frac{1}{1-\lambda_2(\vW)}=O(n^2)$~\cite[Section 2.3]{Olshevsky2014}. 
We keep $\sigma$ in the bounds/rates of involved algorithms for a fair comparison instead of focusing on the worst case that often gives pessimistic/conservative results. 
We omit ``$O(\cdot)$'' in ``Bounds of step-sizes'' and ``Scalabilities'' for brevity and only compare the effect of functional properties ($\mu$ and $L$) and network properties ($\sigma$ and/or $n$).
Before talking into details, let us clarify a few points. 
In EXTRA, $\mu_g$ is a quantity that is associated with the strong convexity of the original function $\bar{f}(x)$, so it covers a larger class of problems; 
In DIGing, $\bar{\mu}$ is the mean value of the strong convexity constants of local objectives; 
In Acc-DNGD, the step-size for the convex case contains $k$, the current number of iterations. Thus it represents a diminishing step-size sequence; 
In Optimal~\cite{scaman2017optimal,uribe2017optimal}, the total number of iterations $K$ is used to determine the step-size for the convex case. 
In addition, they apply to problems in which the objectives are dual friendly (see~\cite{uribe2017optimal} for its definition). 
Note some types of objectives are suitable for gradient update, some are suitable for dual gradient update (dual friendly), and some are suitable for proximal update. 

Finding the algorithm with the lowest per-iteration cost depends on the problem (functions). 
Apparently, our bounds on step-sizes and the corresponding scalability/rate are better than those given in EXTRA and Harness (see Table~\ref{tab: sumup}). 
When $\sigma$ is close to $1$ (the graph is well connected), the step-size bound and scalability given in DIGing are the same as NIDS. 
However, when $\sigma$ is large, their result becomes rather conservative. 
Acc-DNGD and Optimal have improved the scalability/rate of gradient-based distributed optimization by employing Nesterov's acceleration technique on primal and dual problems, respectively. 
For the convex case, our rate is worse than theirs because our algorithm does not employ Nesterov's acceleration. 
For the primal distributed gradient method after acceleration~\cite{qu2017accelerated}, the scalability in $\sigma$ is still worse than our result. 
Algorithm Optimal achieves the optimal scalability/rate for distributed optimization. 
However, as we have mentioned above, their algorithms are dual based thus apply to a different class of problems. 
In addition, NIDS supports proximable non-smooth functions and uncoordinated step-sizes while these have not been considered in Acc-DNGD and Optimal. 
To sum up, we have reached the best possible performance of first-order algorithms for distributed optimization without acceleration. 
Further improving the performance by incorporating Nesterov's techniques to our algorithm will be a future direction.

Finally, we note that, references~\cite{yuan2017exact1,yuan2017exact2}, appearing simultaneously with this work, also proposed~\eqref{eq:compare1b} to enlarge the step-size and use column stochastic matrices rather than symmetric doubly stochastic matrices. 
However, their algorithm only works for smooth problems, and their analysis seems to be restrictive and requires twice differentiability and strong convexity of $\{s_i\}_{i=1}^n$. 
The stepsize is also in the order of $\mu/L^2$~\cite{2019arXiv190401196A}.


\subsection{Future Works}
The capability of our algorithm using purely locally determined parameters increases its potential to be extended to dynamic networks with a time-varying number of nodes. 
Given such flexibility, we may use similar schemes to solve the decentralized empirical risk minimization problems. 
Furthermore, it also enhances the privacy of the agents through allowing each agent to perform their own optimization procedure without negotiation on any parameter.

By using Nesterov's acceleration technique, reference~\cite{Olshevsky2014} shows that the scalability of a new average consensus protocol can be improved to $O(n)$; when the nonsmooth terms $r_i$'s are absent, reference~\cite{scaman2017optimal} shows that the scalability of a new dual based accelerated distributed gradient method can be improved to $O(\sqrt{\sigma L/\mu})$. One of our future work is exploring the convergence rates/scalability of the Nesterov's accelerated version of our algorithm.

\subsection{Paper Organization}
The rest of this paper is organized as follows. 
To facilitate the description of the technical ideas, the algorithms, and the analysis, we introduce additional notation in Subsection~\ref{sec:notation}. 
The intuition for the network-independent step-size is provided in Section~\ref{sec:intuition}. 
In Section~\ref{sec:alg}, we introduce our algorithm NIDS and discuss its relation to some other existing algorithms. 
In Section~\ref{sec:conv_analysis}, we first show that NIDS can be understood as an iterative algorithm for seeking a fixed point. 
Following this, we establish that NIDS converges at an $o(1/k)$ rate for the general convex case and a linear rate for the strongly convex case. 
Then, numerical simulations are given in Section~\ref{sec:num_exp} to corroborate our theoretical claims. 
Final remarks are given in Section~\ref{sec:concl}.

\subsection{Notation}\label{sec:notation}
We use bold upper-case letters such as $\vW$ to define matrices in $\RR^{n\times n}$ and bold lower-case letters such as $\vx$ and $\vz$ to define matrices in $\RR^{n\times p}$ (when $p=1$, they are vectors). 
Let $\vone$ and $\vzero$ be matrices with all ones and zeros, respectively, and their dimensions are provided when necessary.  
For matrices $\vx,~\vy\in\RR^{n\times p}$, we define their inner product as $\langle \vx,\vy\rangle=\tr(\vx^\top\vy)$ and the norm as $\|\vx\|=\sqrt{\langle \vx,\vx\rangle}$. 
Additionally, by an abuse of notation, we define $\langle \vx,\vy\rangle_\vQ = \tr(\vx^\top\vQ\vy)$ and $\|\vx\|_\vQ^2=\langle \vx,\vx\rangle_\vQ$ for any given symmetric matrix $\vQ\in\RR^{n\times n}$. 
Note that $\langle\cdot,\cdot\rangle_\vQ$ is an inner product defined in $\RR^{n\times p}$ if and only if $\vQ$ is positive definite. 
However, when $\vQ$ is not positive definite, $\langle\vx,\vy\rangle_\vQ$ can still be an inner product defined in a subspace of $\RR^{n\times p}$, see Lemma~\ref{lemma:equiv_norm} for more details. We define the range of $\vA\in \RR^{n\times n}$ by $\range(\vA):=\{\vx\in \RR^{n\times p}:\vx=\vA\vy,~\vy\in\RR^{n\times p}\}$. The largest eigenvalue of a symmetric matrix $\vA$ is also denoted as $\lambda_{\max}(\vA)$. For two symmetric matrices $\vA,\vB\in\RR^{n\times n}$, $\vA\succ\vB$ (or $\vA\succcurlyeq\vB$) means that $\vA-\vB$ is positive definite (or positive semidefinite). Moreover, we use $\cN_i$ to represent the set of agents that can directly send messages to agent~$i$. 


\section{Intuition for Network-independent step-size}\label{sec:intuition}
In this section, we provide an intuition for the network-independent step-size for NIDS with only the differentiable function $s$.
The decentralized optimization problem is equivalent to 
$$\Min_\vx~s(\vx), \quad \st\quad (\vI-\vW)^{1/2}\vx=\vzero,$$
where $(\vI-\vW)^{1/2}$ is the square root of $\vI-\vW$, and the constraint is the same as the consensual condition with the mixing matrix $\vW$ given in Assumption~\ref{assu:1}. 
Denote $\Lap = (\vI-\vW)^{1/2}$.
The corresponding optimality condition with the introduction of the dual variable $\vp$ is 
\begin{align*}
    \left[\begin{array}{cc} 0 & \Lap\\ -\Lap& 0\end{array}\right]    \left[\begin{array}{c} \vx^*\\ \vp^* \end{array}\right]=  - \left[\begin{array}{c}\nabla s(\vx^*) \\ 0\end{array}\right].
\end{align*}
EXTRA is equivalent to the Condat-Vu primal-dual algorithm~\cite{wu2018decentralized,li2017aprimal}, and it can be further explained as a forward-backward splitting applied to the equation, i.e.,
\begin{align*}
&\left[\left[\begin{array}{cc} {1\over \alpha}\vI & -\Lap\\ -\Lap& 2\alpha\vI\end{array}\right] +     \left[\begin{array}{cc} 0 & \Lap\\ -\Lap& 0\end{array}\right]   \right] \left[\begin{array}{c} \vx^{k+1}\\ \vp^{k+1} \end{array}\right] \\
&=  \left[\begin{array}{cc} {1\over\alpha}\vI & -\Lap\\ -\Lap& 2\alpha\vI\end{array}\right] \left[\begin{array}{c} \vx^{k}\\ \vp^{k} \end{array}\right] -\left[\begin{array}{c}\nabla s(\vx^k) \\ 0\end{array}\right].
\end{align*}
The update is 
\begin{align*}
    \vx^{k+1} = & \vx^k-\alpha\Lap\vp^k - \alpha \nabla s(\vx^k),\\
    \alpha\vp^{k+1} = & \alpha\vp^k-{1\over 2}\Lap\vx^k+\Lap\vx^{k+1}.
\end{align*}
It is equivalent to EXTRA after $\vp$ is eliminated.
In this case, the new metric is a full matrix, and therefore, the upper bound of the step-size $\alpha$ depends on the matrix $\Lap$. 
To be more specific, $$\left[\begin{array}{cc} {1\over\alpha}\vI & -\Lap\\ -\Lap& 2\alpha\vI\end{array}\right]\succcurlyeq \left[\begin{array}{cc} {L\over 2}\vI & \vzero\\ \vzero& \vzero\end{array}\right],$$ which gives  $\alpha \leq 2(1+\lambda_{\min}(\vW))/L$. 
A larger and optimal upper bound for the step-size of  EXTRA is shown in~\cite{li2017aprimal} (See Table~\ref{tab:comp_EXTRA_NIDS}), and it still depends on $\vW$.
However, we choose a block diagonal metric and have 
\begin{align*}
&\left[\left[\begin{array}{cc} {1\over\alpha}\vI & 0\\ 0& \alpha(\vI+\vW)\end{array}\right] +     \left[\begin{array}{cc} 0 & \Lap\\ -\Lap& 0\end{array}\right]   \right] \left[\begin{array}{c} \vx^{k+1}\\ \vp^{k+1} \end{array}\right] \\
=&  \left[\begin{array}{cc} {1\over\alpha}\vI & 0\\ 0& \alpha(\vI+\vW)\end{array}\right] \left[\begin{array}{c} \vx^{k}\\ \vp^{k} \end{array}\right] -\left[\begin{array}{c}\nabla s(\vx^k) \\ 0\end{array}\right].
\end{align*}
The update becomes 
\begin{align*}
    2\alpha\vp^{k+1} = & \alpha(\vI+\vW)\vp^k+\Lap\vx^k-\alpha\Lap\nabla s(\vx^k),\\
    \vx^{k+1} = & \vx^k - \alpha \nabla s(\vx^k)-\alpha \Lap\vp^{k+1},
\end{align*}
which is equivalent to NIDS after $\vp$ is eliminated.
Because the new metric is block diagonal, and the nonexpansiveness of the forward step depends on the function only, i.e., $\alpha\leq 2/L$.

\section{Proposed Algorithm NIDS}\label{sec:alg}
In this section, we describe our proposed NIDS in Algorithm~\ref{alg:alg1} for solving~\eqref{eq:F} in more details and explain the connections to other related methods.
\begin{algorithm}[!ht]
	\caption{NIDS}
	\begin{algorithmic}\label{alg:alg1}
		\STATE Each agent $i$ obtains its mixing values $w_{ij}, \forall j\in\cN_i$;
		\STATE Each agent $i$ chooses its own step-size $\alpha_i>0$ and the same parameter $c$ (e.g., $c=0.5/\max_i\alpha_i$);
		\STATE Each agent $i$ sets the mixing values
		$\widetilde{w}_{ij}:=c\alpha_i w_{ij},\forall j\in\cN_i$ and $\widetilde{w}_{ii}:=1-c\alpha_i+c\alpha_i w_{ii}$;
		\STATE Each agent $i$ picks arbitrary initial $x_i^0 \in \RR^{p}$ and performs
			\begin{align*}
			z_i^{1} & = x_i^{0}-\alpha_i\nabla s_i\left(x_i^{0}\right), \\
			x_i^{1} & = \argmin\limits_{x\in\RR^p}~\alpha_ir_i(x)+\frac{1}{2}\|x-z_i^{1}\|^{2}. 
			\end{align*}
		\FOR {$k=1,2,3\ldots$} 
		  \STATE Each agents $i$ performs	
			\begin{align*}
			z_i^{k+1}  =  z_i^{k}-x_i^k+\sum_{j=\cN_i\cup\{i\}}\widetilde w_{ij} \left(2x_j^{k}-x_j^{k-1}\right. \\ \left. -\alpha_j\nabla s_j\left(x_j^{k}\right)+\alpha_j\nabla s_j\left(x_j^{k-1}\right)\right), \\
			x_i^{k+1}  = \argmin\limits_{x\in\RR^p}~\alpha_i r_i(x)+\frac{1}{2}\|x-z_i^{k+1}\|^{2}.
			\end{align*}
		\ENDFOR
	\end{algorithmic}
\end{algorithm}

The mixing matrix satisfies the following assumption, which comes from~\cite{shi2015proximal,Shi2015}.
\begin{assumption}[Mixing matrix]\label{assu:1} 
The connected network $\cG=\{\cV,\cE\}$ consists of a set of agents $\cV=\{1,2,\cdots,n\}$ and a set of undirected edges $\cE$. 
An undirected edge $(i,j)\in\cE$ means that there is a connection between agents $i$ and $j$ and both agents can exchange data.
The mixing matrix $\vW=[w_{ij}]\in \RR^{n\times n}$ satisfies:
\begin{enumerate}
\item \textnormal{(Decentralized property).} If $i\neq j$ and $(i,j)\notin \cE$, then $w_{ij}=0$;
\item	\textnormal{(Symmetry).} $\vW = \vW^T$;
\item	\textnormal{(Null space property).} $\Null(\vI-\vW)=\Span(\vone_{n\times 1})$;
\item \textnormal{(Spectral property).} $2\vI \succcurlyeq \vW+\vI\succ \vzero_{n\times n}$.
\end{enumerate}
\end{assumption}

\begin{remark}\label{remark:W}
Assumption~\ref{assu:1} implies that the eigenvalues of $\vW$ lie in $(-1,1]$ and the multiplicity of eigenvalue $1$ is one, i.e., $1=\lambda_1(\vW)>\lambda_2(\vW)\geq \cdots \geq \lambda_n(\vW)>-1$. 
Item 3 of Assumption~\ref{assu:1} shows that $(\vI-\vW)\vone_{n\times 1} = \vzero$ and the orthogonal complement of $\Span(\vone_{n\times 1})$ is the row space of $\vI-\vW$, which is also the column space of $\vI-\vW$ because of the symmetry of $\vW$.
\end{remark}

The functions $\{s_i\}_{i=1}^n$ and $\{r_i\}_{i=1}^n$ satisfy the following assumption.
\begin{assumption} \label{lemma:cocoerciveness}
Functions $\{s_i(x)\}_{i=1}^n$ and $\{r_i(x)\}_{i=1}^n$ are lower semi-continuous proper convex, and $\{s_i(x)\}_{i=1}^n$ have Lipschitz continuous gradients with constants $\{L_i\}_{i=1}^n$, respectively. Thus, we have
\begin{equation}\label{for:cocoer}
\langle \vx-\vy,\nabla s(\vx)-\nabla s(\vy)\rangle\geq \left\| \nabla s\left(\vx\right)-\nabla s\left(\vy\right)\right\|_{\vL^{-1}}^{2},
\end{equation}
where $\vL=\Diag(L_1,\cdots,L_n)$ is the diagonal matrix with the Lipschitz constants~\cite{nesterov2013introductory}. 
\end{assumption}

Instead of using the same step-size for all the agents, we allow agent $i$ to choose its own step-size $\alpha_i$ and let $\Lambda=\Diag(\alpha_1,\cdots,\alpha_n)\in \RR^{n\times n}$. 
Then NIDS can be expressed as 
\begin{subequations}\label{for:alg1}
\begin{align}
\begin{split}
\vz^{k+1} =&  \vz^{k}-\vx^k +  \widetilde \vW(2\vx^k-\vx^{k-1} \\ 
&-\Lambda\nabla s(\vx^k)+\Lambda\nabla s(\vx^{k-1})),
\end{split}\label{for:alg1_z}\\
	\vx^{k+1} = &  \argmin\limits_{\vx\in\RR^{n\times p}}~r(\vx)+\frac{1}{2}\|\vx-\vz^{k+1}\|_{\Lambda^{-1}}^{2}, \label{for:alg1_x}
	\end{align}
\end{subequations}
where $\widetilde\vW = \vI-c\Lambda(\vI-\vW)$ and $c$ is chosen such that $\Lambda^{-1/2}\widetilde\vW\Lambda^{1/2}=\vI-c\Lambda^{1/2}(\vI-\vW)\Lambda^{1/2}\succcurlyeq \vzero$. 
This condition shows that the upper bound of the parameter $c$ depends on $\vW$ and $\Lambda$. 
When the information about $\vW$ is not given, we can just let $c=1/(2\max_i\alpha_i)$ because $\lambda_n(\vW)>-1$. 
To set such a parameter, a preprocessing step is needed to obtain the maximum. However, since the maximum can be easily computed in a connected network in no more than $n-1$ rounds of communication wherein each node repeatedly takes maximum of the values from neighbors, the cost of this preprocessing is essentially negligible compared to the worst-case running time of our optimization protocol.


If all agents choose the same step-size, i.e., $\Lambda=\alpha\vI$, and we let $c=1/(2\alpha)$, ~\eqref{for:alg1} becomes
\begin{subequations}
\begin{align}
\begin{split}
	\vz^{k+1}  = & \vz^{k}-\vx^k+{\vI+\vW\over 2}(2\vx^k-\vx^{k-1}\\
    &-\alpha\nabla s(\vx^k)+\alpha\nabla s(\vx^{k-1})),
\end{split}\\
	\vx^{k+1}  = & \argmin\limits_{\vx\in\RR^{n\times p}}~r(\vx)+\frac{1}{2\alpha}\|\vx-\vz^{k+1}\|^{2}.
	\end{align}
\end{subequations}

\begin{remark} 
The update of PG-EXTRA is
\begin{subequations}\label{pg-extra}
\begin{align}
\begin{split}
	\vz^{k+1}  = & \vz^{k}-\vx^k+{\vI+\vW\over 2}(2\vx^k-\vx^{k-1})\\
    &-\alpha\nabla s(\vx^k)+\alpha\nabla s(\vx^{k-1}),
\end{split}\\
	\vx^{k+1}  = & \argmin\limits_{\vx\in\RR^{n\times p}}~r(\vx)+\frac{1}{2\alpha}\|\vx-\vz^{k+1}\|^{2}.
	\end{align}
\end{subequations}
The only difference between NIDS and PG-EXTRA is that the mixing operation is further applied to the successive difference of the gradients $-\alpha\nabla s(\vx^k)+\alpha\nabla s(\vx^{k-1})$ in NIDS.
\end{remark}

When there is no function $r(\vx)$,~\eqref{for:alg1} becomes 
\begin{align*}
	\vx^{k+1} = &\widetilde \vW(2\vx^k-\vx^{k-1} -\Lambda\nabla s(\vx^k)+\Lambda\nabla s(\vx^{k-1})),
\end{align*}
and it further reduces to~\eqref{eq:compare1b} when $\Lambda=\alpha\vI$ and $c=1/(2\alpha)$. 
Note that, though~\eqref{eq:compare1b} appears in~\cite{yuan2017exact1,yuan2017exact2}, its convergence still needs a small step-size that also depends on the network topology and the strong convexity constant. 
In Theorem 1 of~\cite{yuan2017exact2}, the upper bound for the step-size is also $O(\mu/L^2)$, which is the same as that of PG-EXTRA.


\section{Convergence Analysis of NIDS}\label{sec:conv_analysis}
In order to show the convergence of NIDS, we also need the following assumption.
\begin{assumption}[Solution existence]
Problem~\eqref{eq:F} has at least one solution.
\end{assumption}

To simplify the analysis, we introduce a new sequence $\{\vd^{k}\}_{k\geq0}$ which is defined as 
\begin{align}\label{def:d}
\vd^{k}:={\Lambda^{-1}}(\vx^{k-1}-\vz^k)-\nabla s\left(\vx^{k-1}\right).
\end{align} 
Using the sequence $\{\vx^{k}\}_{k\geq0}$, we obtain a recursive  (update) relation for $\{\vd^{k}\}_{k\geq0}$:
\begin{align*}
    &  \vd^{k+1}\\
=		& {\Lambda^{-1}}(\vx^{k}-\vz^{k+1})-\nabla s(\vx^{k}) \\
=		& {\Lambda^{-1}}(\vx^{k}-\vz^k+\vx^k)-\nabla s(\vx^k)\\
    & -{\Lambda^{-1}}\widetilde\vW(2\vx^k-\vx^{k-1} 
     -\Lambda\nabla s(\vx^k)+\Lambda\nabla s(\vx^{k-1})) \\
=		& {\Lambda^{-1}}(\vx^{k}-\vz^k+\vx^k-2\vx^k+\vx^{k-1}) \\
&-\nabla s(\vx^{k})+\nabla s(\vx^k)-\nabla s(\vx^{k-1}) \\
    & +c(\vI-\vW)(2\vx^k-\vx^{k-1}-\Lambda\nabla s(\vx^k)+\Lambda\nabla s(\vx^{k-1})) \\
=		& \vd^k+c(\vI-\vW)(2\vx^k-\vz^{k}-\Lambda\nabla s(\vx^k)-\Lambda\vd^k),
\end{align*}
where the second equality comes from the update of $\vz^{k+1}$ in~\eqref{for:alg1_z} and the last one holds because of the definition of $\vd^k$ in~\eqref{def:d}.
Therefore, the iteration~\eqref{for:alg1} is equivalent to, with the update order $(\vx,\vd,\vz)$,
\begin{subequations}\label{for:alg2}
\begin{align}
\vx^{k}     = &\argmin\limits_{\vx\in\RR^{n\times p}}~r(\vx)+\frac{1}{2 }\|\vx-\vz^{k}\|_{\Lambda^{-1}}^{2}, \label{for:alg2_x} \\
\begin{split}
\vd^{k+1}  = &\vd^k + c(\vI-\vW)\left(2\vx^{k}-\vz^{k} \right. \\ &\left. -\Lambda\nabla s\left(\vx^{k}\right)-\Lambda\vd^k\right),
\end{split} \label{for:alg2_d}\\
\vz^{k+1}  = & \vx^{k}-\Lambda\nabla s\left(\vx^{k}\right)-\Lambda\vd^{k+1}, \label{for:alg2_z} 
\end{align}
\end{subequations}
in the sense that both \eqref{for:alg1} and \eqref{for:alg2} generate the same $\{\vx^k,\vz^k\}_{k>0}$ sequence. 


Because $\vx^k$ is determined by $\vz^k$ only and can be eliminated from the iteration, iteration~\eqref{for:alg2} is essentially an operator for $(\vd,\vz)$.
Note that we have $\vd^1={\Lambda^{-1}}(\vx^{0}-\vz^1)-\nabla s\left(\vx^{0}\right)=\vzero$ from Algorithm~\ref{alg:alg1}. 
Therefore, from the update of $\vd^{k+1}$ in~\eqref{for:alg2_d}, $\vd^k\in\range(\vI-\vW)$ for all $k$. 
In fact, any $\vz^1$ such that $\vd^1\in\range(\vI-\vW)$ works for NIDS.
The following two lemmas show the relation between fixed points of~\eqref{for:alg2} and optimal solutions of~\eqref{eq:F}. 
The proofs for all lemmas and propositions are included in the supplemental material. 

\begin{lemma}[Fixed point of~\eqref{for:alg2}]\label{lemma:fixedpoint}
$(\vd^*,\vz^*)$ is a fixed point of~\eqref{for:alg2} if and only if there exists a subgradient $\vq^*\in\partial r(\vx^*)$ such that $\vz^*=\vx^*+\Lambda\vq^*$ and
\begin{subequations}
\begin{align}
&\vd^* + \nabla s(\vx^*)+\vq^* & = \vzero ,\label{for:fpopt1}\\
&(\vI-\vW)\vx^* &=\vzero.\label{for:fpopt2}
\end{align}
\end{subequations}
\end{lemma}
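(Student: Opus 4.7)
The plan is to unpack the definition of a fixed point of iteration~\eqref{for:alg2}, i.e., require $(\vx^{k+1},\vd^{k+1},\vz^{k+1})=(\vx^k,\vd^k,\vz^k)$, and then read off the three conditions one by one. I will handle the forward and reverse directions together because each equation in the fixed-point system can be matched bijectively with one of the stated conditions.

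First, I will look at~\eqref{for:alg2_x}. The proximal optimality condition gives $\vzero\in\partial r(\vx^*)+\Lambda^{-1}(\vx^*-\vz^*)$, i.e., there exists $\vq^*\in\partial r(\vx^*)$ with $\vz^*=\vx^*+\Lambda\vq^*$; this is both necessary and sufficient since the objective is strongly convex and $\Lambda\succ\vzero$. Next, from~\eqref{for:alg2_z} at the fixed point I obtain $\vz^*=\vx^*-\Lambda\nabla s(\vx^*)-\Lambda\vd^*$. Combining this with $\vz^*=\vx^*+\Lambda\vq^*$ and canceling $\Lambda\succ\vzero$ yields~\eqref{for:fpopt1}: $\vd^*+\nabla s(\vx^*)+\vq^*=\vzero$. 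Finally, substituting the relation $\vz^*=\vx^*-\Lambda\nabla s(\vx^*)-\Lambda\vd^*$ into the argument of $c(\vI-\vW)$ in~\eqref{for:alg2_d} collapses it to $\vx^*$ (all gradient and $\vd^*$ terms cancel), so the $\vd$-fixed-point condition $\vd^{k+1}=\vd^k$ reduces to $c(\vI-\vW)\vx^*=\vzero$; since $c>0$, this is~\eqref{for:fpopt2}.

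For the reverse direction, I simply reverse each of the above steps: given $\vq^*\in\partial r(\vx^*)$ with $\vz^*=\vx^*+\Lambda\vq^*$, the proximal step returns $\vx^*$; given~\eqref{for:fpopt1}, the $\vz$-update reproduces $\vz^*$; and given~\eqref{for:fpopt2}, the argument inside $c(\vI-\vW)$ becomes $\vx^*$ modulo the kernel of $\vI-\vW$, so the $\vd$-update reproduces $\vd^*$. Each step uses only elementary algebra plus the fact that $\Lambda$ is invertible.

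I do not expect a real obstacle here; the only thing that needs mild care is the order in which I extract information from the three fixed-point equations, because the $\vd$-equation at first glance depends on $\vz^*$, $\vx^*$, $\nabla s(\vx^*)$, and $\vd^*$ simultaneously. The cleanest route is to use the $\vz$-equation first to eliminate the gradient and $\vd^*$ inside $(\vI-\vW)(\,\cdot\,)$, leaving the compact consensus condition $(\vI-\vW)\vx^*=\vzero$. This matches the decomposition the authors advertise: proximal optimality gives $\vq^*$, the $\vz$-equation gives the gradient-inclusion~\eqref{for:fpopt1}, and the $\vd$-equation gives the consensus condition~\eqref{for:fpopt2}.
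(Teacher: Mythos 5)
Your proposal is correct and follows essentially the same route as the paper's proof: unpack the proximal optimality condition of~\eqref{for:alg2_x} to obtain $\vq^*$ and $\vz^*=\vx^*+\Lambda\vq^*$, combine with the fixed-point form of~\eqref{for:alg2_z} to get~\eqref{for:fpopt1}, and substitute back into~\eqref{for:alg2_d} so the argument of $c(\vI-\vW)$ collapses to $\vx^*$, yielding~\eqref{for:fpopt2}; the converse is the same direct verification. The only minor point is that in the reverse direction you should confirm $\vd^{k+1}=\vd^*$ before using the $\vz$-update (which depends on $\vd^{k+1}$), but this is an ordering detail, not a gap.
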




\begin{lemma}[Optimality condition]\label{lemma:condition}
$\vx^*$ is consensual with $x^*_1=x^*_2=\cdots=x^*_n=x^*$ being an optimal solution of problem~\eqref{eq:F} if and only if there exists $\vp^*$ and a subgradient $\vq^*\in\partial r(\vx^*)$  such that:
\begin{subequations}\label{for:opt}
\begin{align}
&(\vI-\vW)\vp^* + \nabla s(\vx^*)+\vq^* &= \vzero ,\label{for:opt1}\\
&(\vI-\vW)\vx^*&=\vzero .\label{for:opt2}
\end{align}
\end{subequations}
In addition, $(\vd^*=(\vI-\vW)\vp^*, \vz^*=\vx^*+\Lambda\vq^*)$ is a fixed point of iteration~\eqref{for:alg2}.
\end{lemma}

Lemma~\ref{lemma:condition} shows that we can find a fixed point of iteration~\eqref{for:alg2} to obtain an optimal solution of problem~\eqref{eq:F}. 
It also tells us that we need $\vd^*\in\range(\vI-\vW)$ to get an optimal solution of problem~\eqref{eq:F}. 
Therefore, we need $\vd^1\in\range(\vI-\vW)$.




\begin{lemma}[Norm over range space]\label{lemma:equiv_norm}
For any symmetric positive semidefinite matrix $\vA\in \RR^{n\times n}$ with rank $r\leq n$, 
let $\lambda_1\geq \lambda_2\geq \dots\geq \lambda_r>0$ be its $r$ eigenvalues. 
Then $\range(\vA)$ defined in Section~\ref{sec:notation} is a $rp$-dimensional subspace in $\RR^{n\times p}$ and has a norm defined by $\|\vx\|^2_{\vA^\dagger}:=\langle \vx, \vA^\dagger \vx\rangle$, where $\vA^\dagger$ is the pseudo inverse of $\vA$. 
In addition, $\lambda_1^{-1}\|\vx\|^2\leq \|\vx\|^2_{\vA^\dagger}\leq \lambda_r^{-1}\|\vx\|^2$ for all $\vx\in\range(\vA)$.
\end{lemma}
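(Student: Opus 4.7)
The plan is to rely entirely on the spectral decomposition of $\vA$. Since $\vA$ is symmetric positive semidefinite of rank $r$, I would write $\vA = \vU_r \vD_r \vU_r^\top$, where $\vD_r = \Diag(\lambda_1, \ldots, \lambda_r)$ and $\vU_r \in \RR^{n \times r}$ has the corresponding orthonormal eigenvectors as columns, so that $\vU_r^\top \vU_r = \vI_r$. The pseudoinverse is then $\vA^\dagger = \vU_r \vD_r^{-1} \vU_r^\top$, which is immediately symmetric positive semidefinite and will drive everything below.

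For the dimension claim, I would note that any $\vx \in \range(\vA)$ satisfies $\vx = \vA\vy$ for some $\vy \in \RR^{n\times p}$, hence $\vx = \vU_r(\vD_r \vU_r^\top \vy)$. This identifies $\range(\vA)$ with $\{\vU_r \vc : \vc \in \RR^{r\times p}\}$, a subspace of dimension $rp$ because $\vU_r$ has full column rank. This parametrization is the key tool for the remaining steps.

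For the norm and the two-sided inequality, I would substitute $\vx = \vU_r \vc$ into the definition. Using $\vU_r^\top \vU_r = \vI_r$, a direct calculation gives
\begin{align*}
\|\vx\|_{\vA^\dagger}^2 = \tr(\vc^\top \vU_r^\top \vU_r \vD_r^{-1} \vU_r^\top \vU_r \vc) = \tr(\vc^\top \vD_r^{-1} \vc), \qquad \|\vx\|^2 = \tr(\vc^\top \vc).
\end{align*}
Since $\lambda_1^{-1} \vI_r \preccurlyeq \vD_r^{-1} \preccurlyeq \lambda_r^{-1} \vI_r$, applying this to each column of $\vc$ (equivalently, taking traces) yields
\begin{align*}
\lambda_1^{-1} \|\vx\|^2 \;\leq\; \|\vx\|_{\vA^\dagger}^2 \;\leq\; \lambda_r^{-1} \|\vx\|^2,
\end{align*}
which is exactly the claimed inequality. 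The lower bound also shows $\|\vx\|_{\vA^\dagger} = 0 \Rightarrow \vx = \vzero$ on $\range(\vA)$, so together with bilinearity and symmetry of $\langle \cdot, \vA^\dagger \cdot\rangle$ inherited from $\vA^\dagger \succcurlyeq \vzero$, the expression $\|\cdot\|_{\vA^\dagger}$ is indeed a norm on $\range(\vA)$.

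I do not expect any real obstacle; the only care point is to be explicit that positive definiteness of $\langle \cdot, \vA^\dagger \cdot\rangle$ holds on the subspace $\range(\vA)$ (not on all of $\RR^{n\times p}$), and this is precisely what the parametrization $\vx = \vU_r \vc$ buys us, since any $\vx$ with $\vU_r^\top \vx \neq \vzero$ is nonzero in the subspace and vice versa.
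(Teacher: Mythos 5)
Your proposal is correct and follows essentially the same route as the paper: both use the reduced spectral decomposition $\vA=\vU\Sigma\vU^\top$ with $\vU^\top\vU=\vI_r$, the explicit pseudoinverse $\vA^\dagger=\vU\Sigma^{-1}\vU^\top$, and a substitution $\vx=\vA\vy$ (your $\vc$ is just the paper's $\Sigma\vU^\top\vy$) to reduce the two-sided bound to the eigenvalue bounds on $\Sigma^{-1}$. Your extra remarks on the $rp$-dimension count and on positive definiteness holding only on $\range(\vA)$ are a welcome bit of added explicitness but do not change the argument.
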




\begin{proposition} \label{prop:M}
Let $\vM=c^{-1}(\vI-\vW)^\dagger-\Lambda$ with $\vI\succcurlyeq c\Lambda^{1/2}(\vI-\vW)\Lambda^{1/2} \succcurlyeq\vzero$. Then $\|\cdot\|_{\vM}$ is a norm defined for $\range(\vI-\vW)$.
\end{proposition}

The following lemma compares the distance to a fixed point of~\eqref{for:alg2} for two consecutive iterates. 

\begin{lemma}[Fundamental inequality]\label{lemma:powerIneq}
Let $(\vd^*,\vz^*)$ be a fixed point of iteration~\eqref{for:alg2} with $\vd^*\in\range(\vI-\vW)$. The update $(\vd^k,\vz^k)\Rightarrow(\vd^{k+1},\vz^{k+1})$ in~\eqref{for:alg2} satisfies
\begin{align}
&\|\vz^{k+1}-\vz^*\|^2_{\Lambda^{-1}}+\|\vd^{k+1}-\vd^*\|_\vM^2 \nonumber\\
\begin{split}
\leq		& \|\vz^{k}-\vz^*\|^2_{\Lambda^{-1}}+\|\vd^{k}-\vd^*\|_\vM^2 \\
& -\|\vz^{k}-\vz^{k+1}\|^2_{\Lambda^{-1}}-\|\vd^{k}-\vd^{k+1}\|_\vM^2  \\
&  +2\langle\nabla s\left(\vx^{k}\right)-\nabla s\left(\vx^{*}\right),\vz^{k}-\vz^{k+1}\rangle \\
& -2\langle \vx^{k}-\vx^{*},\nabla s(\vx^{k})-\nabla s(\vx^{*})\rangle.
\end{split}\label{for:powerIneq}
\end{align}
\end{lemma}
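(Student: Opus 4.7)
The plan is to write both $\|\vz^{k+1}-\vz^*\|_{\Lambda^{-1}}^2 - \|\vz^k-\vz^*\|_{\Lambda^{-1}}^2$ and $\|\vd^{k+1}-\vd^*\|_\vM^2 - \|\vd^k-\vd^*\|_\vM^2$ via the elementary three-term identity $\|\va-\vc\|^2 - \|\vb-\vc\|^2 + \|\va-\vb\|^2 = 2\langle\va-\vc, \va-\vb\rangle$, and then to rewrite the two resulting cross terms $\langle\vz^{k+1}-\vz^*, \vz^{k+1}-\vz^k\rangle_{\Lambda^{-1}}$ and $\langle\vd^{k+1}-\vd^*, \vd^{k+1}-\vd^k\rangle_\vM$ by algebraic manipulation of the updates in \eqref{for:alg2}.

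First I would derive two structural identities. Combining the $\vz$-update \eqref{for:alg2_z} with the fixed-point relation $\vz^* = \vx^* - \Lambda\nabla s(\vx^*) - \Lambda\vd^*$ from Lemma~\ref{lemma:fixedpoint} gives
\[
\vz^{k+1}-\vz^* = (\vx^k-\vx^*) - \Lambda(\nabla s(\vx^k)-\nabla s(\vx^*)) - \Lambda(\vd^{k+1}-\vd^*),
\]
while the optimality of the proximal update \eqref{for:alg2_x} produces a subgradient $\vq^k := \Lambda^{-1}(\vz^k-\vx^k) \in \partial r(\vx^k)$, with $\vq^* = -\nabla s(\vx^*)-\vd^*$ at the fixed point. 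Taking the $\Lambda^{-1}$-inner product of the first identity against $\vz^{k+1}-\vz^k$, then substituting $\Lambda^{-1}(\vz^k-\vz^{k+1}) = \vq^k + \nabla s(\vx^k) + \vd^{k+1}$ (which follows by combining the $\vz$- and $\vx$-updates) and discarding the nonnegative term $\langle \vx^k-\vx^*, \vq^k-\vq^*\rangle$ by monotonicity of $\partial r$, I would recover exactly the desired terms $-2\langle \vx^k-\vx^*, \nabla s(\vx^k)-\nabla s(\vx^*)\rangle$ and $2\langle\nabla s(\vx^k)-\nabla s(\vx^*), \vz^k-\vz^{k+1}\rangle$ from \eqref{for:powerIneq}. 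What is left over is precisely $-2\langle \vx^k-\vx^*, \vd^{k+1}-\vd^*\rangle - 2\langle \vd^{k+1}-\vd^*, \vz^{k+1}-\vz^k\rangle$, which must match the $\vM$-norm contribution.

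The main obstacle is therefore to establish the identity
\[
\langle \vd^{k+1}-\vd^*, \vM(\vd^{k+1}-\vd^k)\rangle = \langle \vx^k-\vx^*, \vd^{k+1}-\vd^*\rangle + \langle \vd^{k+1}-\vd^*, \vz^{k+1}-\vz^k\rangle,
\]
after which the three-term identity applied to the $\vM$-norm immediately closes the proof. To prove it I would split $\vM = c^{-1}(\vI-\vW)^\dagger - \Lambda$. The $\vd$-update \eqref{for:alg2_d} reads $\vd^{k+1}-\vd^k = c(\vI-\vW)\vr$ with $\vr := 2\vx^k-\vz^k-\Lambda\nabla s(\vx^k)-\Lambda\vd^k$, so $c^{-1}(\vI-\vW)^\dagger(\vd^{k+1}-\vd^k) = (\vI-\vW)^\dagger(\vI-\vW)\vr$ is the orthogonal projection of $\vr$ onto $\range(\vI-\vW)$. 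Since $\vd^{k+1}-\vd^* \in \range(\vI-\vW)$ (the invariance noted right after \eqref{for:alg2}), pairing against $\vd^{k+1}-\vd^*$ recovers $\langle\vd^{k+1}-\vd^*, \vr\rangle$ without loss; adding the $-\Lambda(\vd^{k+1}-\vd^k)$ piece folds $-\Lambda\vd^k$ into $-\Lambda\vd^{k+1}$, and recognizing $\vx^k - \Lambda\nabla s(\vx^k) - \Lambda\vd^{k+1} = \vz^{k+1}$ via \eqref{for:alg2_z} collapses the expression to $\langle\vd^{k+1}-\vd^*, \vx^k\rangle + \langle\vd^{k+1}-\vd^*, \vz^{k+1}-\vz^k\rangle$.

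The one subtle step is the final matching of $\langle\vd^{k+1}-\vd^*, \vx^k\rangle$ with $\langle \vx^k-\vx^*, \vd^{k+1}-\vd^*\rangle$, which amounts to showing $\langle \vx^*, \vd^{k+1}-\vd^*\rangle = 0$. This holds because $\vx^*$ is consensual, so every column lies in $\Span(\vone_{n\times 1})$, whereas every column of $\vd^{k+1}-\vd^*$ lies in $\range(\vI-\vW)$, which is the orthogonal complement of $\Span(\vone_{n\times 1})$ by Remark~\ref{remark:W}. Everything else is routine algebraic manipulation of the three updates in \eqref{for:alg2}; notably, neither cocoerciveness of $\nabla s$ nor the spectral condition $\Lambda^{-1/2}\widetilde\vW\Lambda^{1/2}\succ\vzero$ is invoked in this lemma — they will enter only when the lemma is later applied.
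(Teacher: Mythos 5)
Your proposal is correct and follows essentially the same route as the paper's proof: the key identity $\langle \vd^{k+1}-\vd^*,\vz^{k+1}-\vz^k+\vx^k-\vx^*\rangle=\langle \vd^{k+1}-\vd^*,\vd^{k+1}-\vd^k\rangle_{\vM}$ (proved via the pseudo-inverse/projection argument and $\vd^{k+1}-\vd^*\in\range(\vI-\vW)$), the monotonicity of $\partial r$ from the proximal optimality condition, and the orthogonality of the consensual $\vx^*$ to $\range(\vI-\vW)$ are exactly the ingredients the paper uses. The only difference is cosmetic ordering — you apply the polarization identity first and then manipulate the cross terms, whereas the paper chains the inner-product inequalities first and polarizes at the end.
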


\begin{IEEEproof} 
From the update of $\vz^{k+1}$ in~\eqref{for:alg2_z}, we have
\begin{align}
 & \langle \vd^{k+1}-\vd^*,\vz^{k+1}-\vz^k+\vx^k-\vx^*\rangle\nonumber\\
=& \langle \vd^{k+1}-\vd^{*},2\vx^{k}-\vz^{k}-\Lambda\nabla s(\vx^{k})-\Lambda\vd^{k+1}-\vx^*\rangle \nonumber\\
=& \langle \vd^{k+1}-\vd^{*},c^{-1}(\vI-\vW)^\dagger(\vd^{k+1}-\vd^k)\nonumber \\
&+\Lambda\vd^{k}-\Lambda\vd^{k+1}\rangle\nonumber\\
=& \langle \vd^{k+1}-\vd^*,\vd^{k+1}-\vd^k\rangle_\vM, \label{for:main_ineq_1}
\end{align}
where the second equality comes from~\eqref{for:alg2_d},~\eqref{for:opt2}, and $\vd^{k+1}-\vd^*\in \range(\vI-\vW)$.
From~\eqref{for:alg2_x}, we have that 
\begin{align}\label{for:proximal_ineq}
\langle \vx^k-\vx^*, \vz^k-\vx^k-\vz^*+\vx^*\rangle_{\Lambda^{-1}} \geq 0.
\end{align}	
Therefore, we have
\begin{align} 
  & \langle \vx^k-\vx^*,\nabla s(\vx^k)-\nabla s(\vx^*)\rangle\nonumber \\
\leq& \langle \vx^k-\vx^*,{\Lambda^{-1}}(\vz^k-\vx^k-\vz^*+\vx^*)\nonumber \\
&+\nabla s(\vx^k)- \nabla s(\vx^*)\rangle  \nonumber\\
= & \langle \vx^k-\vx^*,{\Lambda^{-1}}(\vz^k-\vz^{k+1})-\vd^{k+1}+\vd^*\rangle\nonumber\\
= & \langle \vx^k-\vx^*,\vz^k-\vz^{k+1}\rangle_{\Lambda^{-1}} + \langle \vd^{k+1}\nonumber\\
&-\vd^*,\vz^{k+1}-\vz^{k}\rangle - \langle \vd^{k+1}-\vd^{*},\vd^{k+1}-\vd^k\rangle_\vM  \nonumber\\
= & \langle {\Lambda^{-1}}(\vx^k-\vx^*)-\vd^{k+1}+\vd^*,\vz^k-\vz^{k+1}\rangle  \nonumber\\
&-\langle \vd^{k+1}-\vd^{*},\vd^{k+1}-\vd^k\rangle_\vM  \nonumber\\
= & \langle {\Lambda^{-1}}(\vz^{k+1}-\vz^*)+\nabla s(\vx^k)-\nabla s(\vx^*),\vz^k-\vz^{k+1}\rangle\nonumber\\
& - \langle \vd^{k+1}-\vd^{*},\vd^{k+1}-\vd^k\rangle_\vM  \nonumber\\
= & \langle \vz^{k+1}-\vz^*,\vz^k-\vz^{k+1}\rangle_{\Lambda^{-1}} \nonumber \\
&+ \langle \nabla s(\vx^k)-\nabla s(\vx^*),\vz^k-\vz^{k+1}\rangle\nonumber \\
  & + \langle \vd^{k+1}-\vd^*,\vd^{k}-\vd^{k+1}\rangle_\vM  \nonumber.
\end{align}
The inequality and the second equality comes from~\eqref{for:proximal_ineq} and~\eqref{for:main_ineq_1}, respectively. The first and fourth equalities hold because of the update of $\vz^{k+1}$ in~\eqref{for:alg2_z}. Using $2\langle \va,\vb\rangle=\|\va+\vb\|^{2}-\|\va\|^{2}-\|\vb\|^{2}$ and rearranging the previous inequality give us that
\begin{align*}
& 2\langle \vx^{k}-\vx^{*},\nabla s(\vx^{k})-\nabla s(\vx^{*})\rangle \\
&- 2\langle\nabla s(\vx^{k})-\nabla s(\vx^{*}),\vz^{k}-\vz^{k+1}\rangle \nonumber\\
\leq		&2\langle\vz^{k+1}-\vz^{*},\vz^{k}-\vz^{k+1}\rangle_{\Lambda^{-1}} \nonumber \\
&+2\langle \vd^{k+1}-\vd^{*},\vd^{k}-\vd^{k+1}\rangle_{\vM} \nonumber\\
=		&\|\vz^{k}-\vz^{*}\|^{2}_{\Lambda^{-1}}-\|\vz^{k+1}-\vz^{*}\|^{2}_{\Lambda^{-1}}-\|\vz^{k}-\vz^{k+1}\|^{2}_{\Lambda^{-1}} \nonumber\\
&  +\|\vd^{k}-\vd^{*}\|_{\vM}^{2}-\|\vd^{k+1}-\vd^{*}\|_{\vM}^{2}-\|\vd^{k}-\vd^{k+1}\|_{\vM}^{2}.
\end{align*}
Therefore,~\eqref{for:powerIneq} is obtained.
\end{IEEEproof}

\subsection{Sublinear convergence of NIDS} 
As explained in Section~\ref{sec:intuition}, NIDS is equivalent to the primal-dual algorithm~\cite{Yan2016} applied to problem 
\begin{align}\label{NIDS-PD3O}
\Min_\vx~s(\vx) + r(\vx) + \iota((\vI-\vW)^{1/2}\vx),
\end{align} 
where $\iota(\cdot)$ is the indicator function, which return 0 for $\vzero$ and $+\infty$ otherwise,
with the metric matrix being 
$$\left[\begin{array}{cc} \Lambda^{-1} & \vzero\\ \vzero & c^{-1}\vI - (\vI-\vW)^{1/2}\Lambda(\vI-\vW)^{1/2}\end{array}\right].$$
We apply~\cite[Theorem 1]{Yan2016} and obtain the following sublinear convergence result.

\begin{theorem}[Sublinear rate]\label{lemma:conv_rate}
Let $(\vd^k,\vz^k)$ be the sequence generated from NIDS in~\eqref{for:alg2} with $\alpha_i< 2/L_i$ for all $i$ and $\vI\succcurlyeq c\Lambda^{1/2}(\vI-\vW)\Lambda^{1/2}$. 
We have
\begin{align}\label{conv:sublinear}
\begin{split}
&\|\vz^{k}-\vz^{k+1}\|^2_{\Lambda^{-1}}+\|\vd^{k}-\vd^{k+1}\|_{\vM}^{2} \\
&\quad  \leq   \frac{\|\vz^{1}-\vz^{*}\|^2_{\Lambda^{-1}} +\|\vd^{1}-\vd^{*}\|_{\vM}^{2} }{k(1-\max_i{\alpha_iL_i\over 2})},\end{split}\\
&\|\vz^{k}-\vz^{k+1}\|^2_{\Lambda^{-1}}+\|\vd^{k}-\vd^{k+1}\|_{\vM}^{2} = & o\left(\frac{1}{k+1}\right).\nonumber
\end{align}
Furthermore, $(\vd^k,\vz^k)$ converges to a fixed point $(\bar\vd,\bar\vz)$ of iteration~\eqref{for:alg2} and $\bar\vd\in\range(\vI-\vW)$, if $\vI\succ c\Lambda^{1/2}(\vI-\vW)\Lambda^{1/2}$.
\end{theorem}

\begin{remark} 
Note the convergence in Theorem~\ref{lemma:conv_rate} is shown in $\vz$ and $\vd$. 
We will show the convergence in terms of~\eqref{for:opt}.
Recall that
\begin{align*}
\vz^{k+1}-\vz^k 
= & \vx^k-\Lambda\nabla s(\vx^k)-\Lambda \vd^{k+1}-\vz^k \\
= &-\Lambda(\vd^{k+1}+\nabla s(\vx^k)+\vq^k),
\end{align*}
where $\vq^k\in\partial r(\vx^k)$. Therefore, $\|\vz^{k+1}-\vz^{k}\|_{\Lambda^{-1}}^{2}\rightarrow0$ implies the convergence in terms of~\eqref{for:opt1}.

Combining~\eqref{for:alg2_d} and~\eqref{for:alg2_z}, we have 
\begin{align*}
\vd^{k+1} 
= & \vd^k + c(\vI-\vW)(\vx^{k}-\vz^{k}+\vz^{k+1})\\
&+c(\vI-\vW)\Lambda(\vd^{k+1}-\vd^k).
\end{align*}
Rearranging it gives 
\begin{align*}
&\left(\vI-c(\vI-\vW)\Lambda\right)\left(\vd^{k+1}-\vd^{k}\right) \\
=&c(\vI-\vW)\left(\vx^{k}-\vz^{k}+\vz^{k+1}\right).
\end{align*}
Then we have 
\begin{align*}
 &\|c(\vI-\vW)\left(\vx^{k}-\vz^{k}+\vz^{k+1}\right)\|^2 \\
= & \|c\left(\vI-\vW\right)\vM^{1/2}\vM^{1/2}\left(\vd^{k+1}-\vd^{k}\right)\|^2 \\
\leq & \| c(\vI-\vW)\vM^{1/2}\|^{2} \left\|\vd^{k+1}-\vd^{k}\right\|^2_\vM,
\end{align*}
where the second equality comes from $\vd^{k+1}-\vd^k\in\range(\vI-\vW)$.
Thus $\|\vz^{k+1}-\vz^{k}\|_{\Lambda^{-1}}^{2}+\|\vd^{k+1}-\vd^k\|^2_\vM\rightarrow0$ implies the convergence in terms of~\eqref{for:opt2}.
\end{remark}


\subsection{Linear convergence for special cases}
In this subsection, we provide the linear convergence rate for the case when $r(\vx)=0$, i.e., $\vz^k=\vx^k$ in NIDS.

\begin{theorem}\label{thm:linear_conv}
If $\{s_i(x)\}_{i=1}^n$ are strongly convex with parameters $\{\mu_i\}_{i=1}^n$, then
\begin{equation}\label{for:strong_conv}
\langle \vx-\vy,\nabla s(\vx)-\nabla s(\vy)\rangle\geq \left\|\vx-\vy\right\|_{\vS}^{2},
\end{equation}
where $\vS=\Diag(\mu_1,\cdots,\mu_n)\in\RR^{n\times n}$. 
Let $(\vd^k,\vx^k)$ be the sequence generated from NIDS with $\alpha_i< 2/L_i$ for all $i$ and $\vI\succcurlyeq c\Lambda^{1/2}(\vI-\vW)\Lambda^{1/2}$. 
We define 
\begin{align}\label{def:rho}
\begin{split}
\rho =\max &\left( 1-(2-\max_i(\alpha_iL_i))\min_i(\mu_i\alpha_i),\right. 
\\  &\left. 1-{c\over \lambda_{\max}(\Lambda^{-1/2}(\vI-\vW)^\dagger\Lambda^{-1/2})} \right),
\end{split}
\end{align}
and have
\begin{align}\label{for:linear_conv}
\begin{split}
&\|\vx^{k+1}-\vx^*\|^2_{\Lambda^{-1}}+\|\vd^{k+1}-\vd^*\|_{\vM+\Lambda}^2  \\ \leq & \rho\left(\|\vx^{k}-\vx^*\|^2_{\Lambda^{-1}}+\|\vd^{k}-\vd^*\|_{\vM+\Lambda}^2\right).\end{split}
\end{align}
\end{theorem}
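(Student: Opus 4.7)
The plan is to extend the sublinear-rate template of Theorem~\ref{lemma:conv_rate} by squeezing a linear contraction both out of the strong convexity of $\{s_i\}$ and out of the spectral gap of $\vI-\vW$. First I would reduce: when $r\equiv 0$, the $\vx$-update in~\eqref{for:alg2_x} forces $\vx^k=\vz^k$ for every $k$, and Lemma~\ref{lemma:fixedpoint} implies $\vz^*=\vx^*$ and $\vd^*=-\nabla s(\vx^*)$. Writing $\vv^k:=\vx^k-\vx^*$ and $\vu^k:=\vd^k-\vd^*\in\range(\vI-\vW)$, equation~\eqref{for:alg2_z} then gives the key identity $\vv^{k+1}=\vv^k-\Lambda(\nabla s(\vx^k)-\nabla s(\vx^*))-\Lambda\vu^{k+1}$, which I will use to couple the two blocks later.

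Next, I would upgrade Lemma~\ref{lemma:powerIneq} into a strict contraction on the $\vx$-part. With $\beta:=\max_i\alpha_iL_i<2$, Young's inequality on the cross term $2\langle\nabla s(\vx^k)-\nabla s(\vx^*),\vx^k-\vx^{k+1}\rangle$ produces $\beta\|\nabla s(\vx^k)-\nabla s(\vx^*)\|^2_{\vL^{-1}}+\beta^{-1}\|\vx^k-\vx^{k+1}\|^2_\vL$, and splitting the scalar product $\langle\vx^k-\vx^*,\nabla s(\vx^k)-\nabla s(\vx^*)\rangle$ into a convex combination of cocoercivity (weight $\beta/2$) and strong convexity (weight $1-\beta/2$) cancels the $\|\nabla s\|^2_{\vL^{-1}}$ pieces and produces the strong-convexity term $-(2-\beta)\|\vv^k\|^2_\vS$. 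Because $\alpha_iL_i\le\beta$ for every $i$, the quadratic $\beta^{-1}\|\vx^k-\vx^{k+1}\|^2_\vL$ is absorbed into $\|\vx^k-\vx^{k+1}\|^2_{\Lambda^{-1}}$, and combining with $\|\vv^k\|^2_\vS\ge\min_i(\mu_i\alpha_i)\|\vv^k\|^2_{\Lambda^{-1}}$ yields an intermediate inequality
\[ \|\vv^{k+1}\|^2_{\Lambda^{-1}}+\|\vu^{k+1}\|^2_\vM\le\rho_1\|\vv^k\|^2_{\Lambda^{-1}}+\|\vu^k\|^2_\vM-\|\vu^{k+1}-\vu^k\|^2_\vM, \]
where $\rho_1:=1-(2-\max_i\alpha_iL_i)\min_i(\mu_i\alpha_i)$ is the first candidate inside the $\max$ defining $\rho$.

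The harder step is promoting the $\vM$-norm on the $\vd$-side to $\|\cdot\|^2_{\vM+\Lambda}=c^{-1}\|\cdot\|^2_{(\vI-\vW)^\dagger}$. Substituting the $\vv^{k+1}$-identity into the $\vd$-update~\eqref{for:alg2_d} yields the linear relation
\[ (\vI-c(\vI-\vW)\Lambda)(\vu^{k+1}-\vu^k)=c(\vI-\vW)\vv^{k+1}, \]
which ties one step of the $\vd$-iteration to $\vv^{k+1}$. Using this coupling together with Lemma~\ref{lemma:equiv_norm} applied to $(\vI-\vW)^\dagger$ on $\range(\vI-\vW)$, the spectral condition $c\Lambda^{1/2}(\vI-\vW)\Lambda^{1/2}\prec\vI$, and the identity $1/\lambda_{\max}(\Lambda^{-1/2}(\vI-\vW)^\dagger\Lambda^{-1/2})=\lambda_{\min}^+(\Lambda^{1/2}(\vI-\vW)\Lambda^{1/2})$, I would extract the network-dependent factor $\rho_2:=1-c/\lambda_{\max}(\Lambda^{-1/2}(\vI-\vW)^\dagger\Lambda^{-1/2})$ on the $\Lambda$-component of $\|\vu^{k+1}\|^2_{\vM+\Lambda}$. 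Adding $\|\vu^{k+1}\|^2_\Lambda$ to both sides of the intermediate inequality, using this $\rho_2$-bound, and letting the residual $-\|\vu^{k+1}-\vu^k\|^2_\vM$ absorb the cross terms produced by expanding $\|\vu^{k+1}\|^2_\Lambda=\|\vu^k+(\vu^{k+1}-\vu^k)\|^2_\Lambda$, would then deliver~\eqref{for:linear_conv} with $\rho=\max(\rho_1,\rho_2)$.

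The main obstacle is this last step. The $\Lambda$-piece of the target norm does not appear naturally in Lemma~\ref{lemma:powerIneq}, so the network-dependent rate $\rho_2$ must be pulled out by coupling $\vu^{k+1}-\vu^k$ to $\vv^{k+1}$ through the algebraic identity above and then carefully translating between the $\Lambda$-norm, the $\vM$-norm, and the $(\vI-\vW)^\dagger$-norm on $\range(\vI-\vW)$ via Lemma~\ref{lemma:equiv_norm}. The delicate bookkeeping is to pick the Young weight $\beta$, the cocoercivity/strong-convexity split, and the decomposition of $\|\vu^{k+1}\|^2_\Lambda$ so that the $\|\nabla s\|^2_{\vL^{-1}}$ contributions cancel exactly, the leftover slack $-\|\vu^{k+1}-\vu^k\|^2_\vM$ covers the cross terms, and both $\rho_1$ and $\rho_2$ remain sharp—thereby recovering the decoupled dependence on objective conditioning and network conditioning promised by~\eqref{def:rho}.
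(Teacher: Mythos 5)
Your first block is sound and essentially matches the paper: combining Lemma~\ref{lemma:powerIneq} (with $\vz^k=\vx^k$) with the split of $2\langle \vx^k-\vx^*,\nabla s(\vx^k)-\nabla s(\vx^*)\rangle$ into cocoercivity with weight $\beta=\max_i\alpha_iL_i$ and strong convexity with weight $2-\beta$ does yield the $\rho_1$ factor on the $\vx$-part. The genuine gap is in your last step, and it is not bookkeeping. By applying Young's inequality to $2\langle\nabla s(\vx^{k})-\nabla s(\vx^{*}),\vx^{k}-\vx^{k+1}\rangle$ you discard exactly the quantity the theorem needs. The paper instead completes the square exactly:
\begin{align*}
2\langle\nabla s(\vx^{k})-\nabla s(\vx^{*}),\vx^{k}-\vx^{k+1}\rangle
={}&\|\vx^{k}-\vx^{k+1}\|^2_{\Lambda^{-1}}+\|\nabla s(\vx^{k})-\nabla s(\vx^{*})\|^2_{\Lambda}\\
&-\|\vx^{k}-\vx^{k+1}-\Lambda\nabla s(\vx^{k})+\Lambda\nabla s(\vx^{*})\|^2_{\Lambda^{-1}},
\end{align*}
and by \eqref{for:alg2_z} together with $\Lambda\nabla s(\vx^*)+\Lambda\vd^*=\vzero$ the last square is precisely $\|\vd^{k+1}-\vd^*\|^2_{\Lambda}$ --- the full vector, not the increment. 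Moving this term to the left is what upgrades $\|\vd^{k+1}-\vd^*\|^2_{\vM}$ to $\|\vd^{k+1}-\vd^*\|^2_{\vM+\Lambda}$, after which $\rho_2$ comes from the elementary spectral comparison $\|\vu\|^2_{\vM}\leq\frac{\lambda_{\max}(\Lambda^{-1/2}\vM\Lambda^{-1/2})}{\lambda_{\max}(\Lambda^{-1/2}\vM\Lambda^{-1/2})+1}\|\vu\|^2_{\vM+\Lambda}$ on $\range(\vI-\vW)$; no coupling to the $\vd$-update is needed at all.

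Your substitute mechanism cannot close this. The residual $-\|\vu^{k+1}-\vu^k\|^2_{\vM}$ controls only the increment, and the identity $(\vI-c(\vI-\vW)\Lambda)(\vu^{k+1}-\vu^k)=c(\vI-\vW)\vv^{k+1}$ bounds that increment by $\vv^{k+1}$, not by $\vu^k$ or $\vu^{k+1}$, so it offers no contraction on the $\Lambda$-component of $\vu^k$. Concretely, expanding $\|\vu^{k+1}\|^2_{\Lambda}=\|\vu^k+(\vu^{k+1}-\vu^k)\|^2_{\Lambda}$ and bounding the cross term leaves a coefficient $1+\epsilon>1$ on $\|\vu^k\|^2_{\Lambda}$, which can never be driven down to $\rho_2<1$; moreover, absorbing the resulting $(1+\epsilon^{-1})\|\vu^{k+1}-\vu^k\|^2_{\Lambda}$ into $-\|\vu^{k+1}-\vu^k\|^2_{\vM}$ would require $\vM\succcurlyeq(1+\epsilon^{-1})\Lambda$ on $\range(\vI-\vW)$, which the standing condition ($\vM\succ\vzero$ there) does not give. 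The fix is to abandon Young's inequality on that cross term, keep the exact completed square, and identify it as $\|\vd^{k+1}-\vd^*\|^2_{\Lambda}$; the remainder of your outline then goes through.
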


\begin{IEEEproof}
From~\eqref{for:powerIneq}, we have
\begin{align}
        & \|\vx^{k+1}-\vx^*\|^2_{\Lambda^{-1}}+\|\vd^{k+1}-\vd^*\|_\vM^2 \nonumber\\
\leq		& \|\vx^{k}-\vx^*\|^2_{\Lambda^{-1}}+\|\vd^{k}-\vd^*\|_\vM^2 \nonumber\\
& -\|\vx^{k}-\vx^{k+1}\|^2_{\Lambda^{-1}}-\|\vd^{k}-\vd^{k+1}\|_\vM^2  \label{for:linear_1}\\
        & +2\langle\nabla s\left(\vx^{k}\right)-\nabla s\left(\vx^{*}\right),\vx^{k}-\vx^{k+1}\rangle \nonumber\\
        &-2\langle \vx^{k}-\vx^{*},\nabla s(\vx^{k})-\nabla s(\vx^{*})\rangle.\nonumber
\end{align}
For the two inner product terms, we have
\begin{align}
        & 2\langle\nabla s\left(\vx^{k}\right)-\nabla s\left(\vx^{*}\right),\vx^{k}-\vx^{k+1}\rangle\nonumber\\
        &-2\langle \vx^{k}-\vx^{*},\nabla s(\vx^{k})-\nabla s(\vx^{*})\rangle\cr
=       & -\|\vx^{k}-\vx^{k+1} -\Lambda\nabla s\left(\vx^{k}\right)+\Lambda\nabla s\left(\vx^{*}\right)\|^2_{\Lambda^{-1}}\cr
        & +\|\vx^{k}-\vx^{k+1}\|^2_{\Lambda^{-1}}+\|\nabla s\left(\vx^{k}\right)-\nabla s\left(\vx^{*}\right)\|^2_{\Lambda}\cr
        & -2\langle \vx^{k}-\vx^{*},\nabla s(\vx^{k})-\nabla s(\vx^{*})\rangle\cr
\leq    & -\|\vd^{k+1}-\vd^*\|^2_{\Lambda}+\|\vx^{k}-\vx^{k+1}\|^2_{\Lambda^{-1}}\cr 
&+\|\nabla s\left(\vx^{k}\right)-\nabla s\left(\vx^{*}\right)\|^2_{\Lambda}\cr
        & -\max_i(\alpha_iL_i)\|\nabla s\left(\vx^{k}\right)-\nabla s\left(\vx^{*}\right)\|^2_{\vL^{-1}}\cr 
&-(2-\max_i(\alpha_iL_i))\|\vx^k-\vx^*\|^2_{\vS}\cr
\leq    & -\|\vd^{k+1}-\vd^*\|^2_{\Lambda}+\|\vx^{k}-\vx^{k+1}\|^2_{\Lambda^{-1}} \nonumber\\
        & -(2-\max_i(\alpha_iL_i))\min_i(\mu_i\alpha_i)\|\vx^k-\vx^*\|^2_{\Lambda^{-1}}. \label{for:linear_2}
\end{align}
The first inequality comes from $\vx^{k+1} = \vx^{k} -\Lambda\nabla s\left(\vx^{k}\right)-\vd^{k+1}$, $\Lambda\nabla s\left(\vx^{*}\right)+\vd^*=\vzero$,~\eqref{for:cocoer}, and~\eqref{for:strong_conv}.
Combing~\eqref{for:linear_1} and~\eqref{for:linear_2}, we have
\begin{align*}
          &\|\vx^{k+1}-\vx^*\|^2_{\Lambda^{-1}}+\|\vd^{k+1}-\vd^*\|_\vM^2 \\
\leq		& \|\vx^{k}-\vx^*\|^2_{\Lambda^{-1}}+\|\vd^{k}-\vd^*\|_\vM^2 -\|\vd^{k+1}-\vd^*\|^2_{\Lambda} \\
        & -(2-\max_i(\alpha_iL_i))\min_i(\mu_i\alpha_i)\|\vx^k-\vx^*\|^2_{\Lambda^{-1}}.\nonumber
\end{align*}
Therefore, 
\begin{align*}
 &\|\vx^{k+1}-\vx^*\|^2_{\Lambda^{-1}}+\|\vd^{k+1}-\vd^*\|_{\vM+\Lambda}^2 \\
\leq		& (1-(2-\max_i(\alpha_iL_i))\min_i(\mu_i\alpha_i))\|\vx^{k}-\vx^*\|^2_{\Lambda^{-1}}\\
& +{\lambda_{\max} (\Lambda^{-1/2}\vM\Lambda^{-1/2})\over \lambda_{\max}(\Lambda^{-1/2}\vM\Lambda^{-1/2})+1}\|\vd^{k}-\vd^*\|_{\vM+\Lambda}^2.  
\end{align*}
Since 
\begin{align*}
&{\lambda_{\max} (\Lambda^{-1/2}\vM\Lambda^{-1/2})\over \lambda_{\max}(\Lambda^{-1/2}\vM\Lambda^{-1/2})+1} \\
= &1-{c\over \lambda_{\max} (\Lambda^{-1/2}(\vI-\vW)^\dagger\Lambda^{-1/2})}.
\end{align*}
Let $\rho$ defined as~\eqref{def:rho}, then we have~\eqref{for:linear_conv}.
\end{IEEEproof}

\begin{remark}\label{remark4}
The condition $\vI\succcurlyeq c\Lambda^{1/2}(\vI-\vW)\Lambda^{1/2}$ implies that $c\leq \lambda_{n-1}(\Lambda^{-1/2}(\vI-\vW)^\dagger\Lambda^{-1/2})$. 

\begin{itemize}
\item If the agents across the whole network use an identical stepsize $\alpha$, that is, ${\Lambda}=\alpha\vI$, then 
\begin{align*}
\begin{split}
\rho =\max & \left(1-(2-\alpha\max_iL_i)\alpha\min_i\mu_i,\right. \\ & \left. 1-{c\alpha\over \lambda_{\max}((\vI-\vW)^\dagger)} \right).
\end{split}
\end{align*}
A concise but informative expression of the rate $\rho =\max\left(1-{\min_i\mu_i\over \max_iL_i},{\lambda_2(\vW)-\lambda_n(\vW)\over 1-\lambda_n(\vW)} \right)$ can be obtained when we specifically choose $\alpha={1\over \max_i L_i}$ and $c={1\over ( 1-\lambda_n(\vW))\alpha}$. 
When $\lambda_{n}(\vW)$ is not given,  we choose $c={1/(2\alpha)}$ and obtain the scalability $\max\left({\max_iL_i\over\min_i\mu_i },{2\over1-\lambda_2(\vW)} \right)$. 
In this case, the network impact and the functional impact are decoupled.

\item If we let ${\Lambda}=\vL^{-1}$ and $c=\lambda_{n-1} (\vL^{-1/2}(\vI-\vW)^\dagger\vL^{-1/2})$, then the rate becomes 
\begin{align*}
\begin{split}
\rho =\max & \left(1-\min_i{\mu_i\over L_i},\right. \\ 
&\left.1-{\lambda_{n-1} (\vL^{1/2}(\vI-\vW)^\dagger\vL^{1/2})\over \lambda_{\max}(\vL^{1/2}(\vI-\vW)^\dagger\vL^{1/2})}  \right).
\end{split}
\end{align*}
When $\lambda_{n}(\vW)$ is not given, we choose $c=1/(2\max_i\alpha_i)=\min_i L_i/2$ and obtain the scalability $\max\left(\max_i\frac{L_i}{\mu_i},\frac{\max_i L_i}{\min_i L_i}\cdot\frac{2}{1-\lambda_2(\vW)}\right)$. 
In this case, the networking impact is coupled with the function factors, i.e., the smoothness heterogeneity $\frac{\max_i L_i}{\min_i L_i}$ is multiplied on the networking impact. 
While the other number depends on the functional condition numbers $\frac{L_i}{\mu_i}$'s only.
\end{itemize}
\end{remark}

\begin{remark}
Theorem~\ref{thm:linear_conv} separates the dependence of the linear convergence rate on the functions and the network structure. 
In our current scheme, all the agents perform information exchange and the proximal-gradient step once in each iteration. If the proximal-gradient step is expensive, this explicit rate formula can help us to decide whether the so-called multi-step consensus can help reducing the computational time. 

For the sake of simplicity, let us assume for this moment that all the agents have the same strong convexity constant $\mu$ and gradient Lipschitz continuity constant $L$. Suppose that the ``$t$-step consensus'' technique is employed, i.e., the mixing matrix $\vW$ in our algorithm is replaced by $\vW^t$, where $t$ is a positive integer. Then to reach $\epsilon$-accuracy, the number of iterations needed is
\[O\left(\max\left(\frac{L}{\mu},\frac{1-\lambda_n(\vW^t)}{1-\lambda_2(\vW^t)}\right)\right)\log\frac{1}{\epsilon}.\]
When $L/\mu=1$ and step sizes are chosen as $\Lambda=\vL^{-1}$, it says that we should let $t\rightarrow+\infty$ if the graph is not a complete graph. Such theoretical result is correct in intuition since in this case, the centralized gradient descent only needs one step to reach optimal and the bottleneck in decentralized optimization is the network.

Suppose $t_{\max}$ is a reasonable upper bound on $t$, which is set by the system designer. It is difficult to explicitly find an optimal $t$. But with the above analysis as an evidence, we suggest that one choose $t=\min\left([\log_{\lambda_2(\vW)}(1-\frac{\mu}{L})],t_{\max}\right)$ if $1-\frac{\mu}{L}>\lambda_2(\vW)$; otherwise $t=1$. Here $[\cdot]$ gives the nearest integer. 

If the bottleneck is on the functions, we can introduce a mapping $x=By$ and change the unknown variable from $x$ to $y$. 
E.g., if the function $s_i(x)$ is a composition of a convex function with a linear mapping, replacing $x$ using $y$ changes the linear mapping and the condition number $L/\mu$ of the function.  
When $B$ is diagonal, it is similar to the column normalization in machine learning applications. 
There are other possible ways for reducing the condition number of the functions. 
It is out of the scope of this work, and we leave this as future work. 

\end{remark}
%
%

%
%
%

\section{Numerical Experiments}\label{sec:num_exp}
In this section, we compare the performance of NIDS with several state-of-the-art algorithms for decentralized optimization. These methods are
\begin{itemize}
	\item The EXTRA/PG-EXTRA (see \eqref{pg-extra});
	\item The DIGing-ATC~\cite{nedic2016geometrically}. For reference, the DIGing-ATC updates are provided as follows:
		\begin{align*}
		\vx^{k+1} =& \vW(\vx^k-\alpha\vy^k),\\
		\vy^{k+1} =& \vW(\vy^k + \nabla f(\vx^{k+1}) - \nabla f(\vx^k)).
		\end{align*}
\item The accelerated distributed Nesterov gradient descent (Acc-DNGD-SC in~\cite{qu2017accelerated});
\item The (dual friendly) optimal algorithm (OA) for distributed optimization (equation (7) in~\cite{uribe2017optimal}).
\end{itemize}
Note there are two rounds of communication in each iteration of DIGing-ATC and Acc-DNGD-SC while there is only one round in that of EXTRA/NIDS/OA. 
For all the experiments, we first compute the exact solution $\vx^*$ for~\eqref{eq:F} using the centralized (proximal) gradient descent. 
All networks are randomly generated with connectivity ratio $\tau$, where
$\tau$ is defined as the number of actual edges divided by the total number of possible edges ${{n(n-1)}\over 2}$. 
We will report the specific $\tau$ used in each test. The mixing matrix $\vW$ is always chosen with the Metropolis rule (see~\cite{boyd2004fastest} and~\cite[Section 2.4]{Shi2015}).

The experiments are carried in Matlab R2016b running on a laptop with Intel i7 CPU @ 2.60HZ, 16.0 GB of RAM, and Windows 10 operating system. 
The source codes for reproducing the numerical results can be accessed at~\href{https://github.com/mingyan08/NIDS}{https://github.com/mingyan08/NIDS}.

\subsection{The strongly convex case with $r(\vx)=0$}\label{sec:num1} 
Consider the decentralized problem that solves for an unknown signal $x \in \RR^{p}$. Each agent $i \in \{1,\cdots,n\}$ takes its own measurement via $y_i=\vM_ix+e_i$, where $y_i \in \RR^{m_i}$ is the measurement vector, $\vM_i \in \RR^{m_i \times p} $ is the sensing matrix, and $e_i \in \RR^{m_i} $ is the independent and identically distributed noise. 
To estimate $x$ collaboratively, we apply the decentralized algorithms to solve
\begin{align*}
\Min_x~&\frac{1}{n}\sum_{i=1}^{n}{1\over2}\|\vM_ix-y_i\|^2 
\end{align*}

In order to ensure that each function ${1\over 2}\|\vM_ix-y_i\|^2$ is strongly convex, we choose $m_i=60$ and $p=50$ and set the number of nodes $n=40$.
For the first experiment, we choose $\vM_i$ such that the Lipshchitz constant of $\nabla s_i$ satisfies $L_i=1$ and the strongly convex constant $\mu_i=0.5$ for all $i$. 
Based on Remark~\ref{remark4}, we choose $\alpha={1/(\max_i L_i)}=1$ and $c={1/(1-\lambda_n(\vW))}$ for NIDS. 
In addition, we choose $c={1/2}$ such that $\widetilde{\vW}={\vI+\vW\over 2}$, which gives the same as that for EXTRA. 

The comparison of these methods (NIDS with $c={1/((1-\lambda_n(\vW))\alpha)}$, NIDS with $c=1/2$, EXTRA, DIGing-ATC, Acc-DNGD-SC, and OA) is shown in Fig.~\ref{fig:compare_c_noR} for two different networks with connectivity ratios $\tau=0.35$ (top) and $\tau=0.45$ (bottom), respectively. 
It shows better performance of NIDS in both choices of $c$ (corresponding to known $\vW$ and unknown $\vW$) than that of other algorithms. 
NIDS with $c={1/((1-\lambda_n(\vW))\alpha)}$ always takes less than half the number of iterations used by EXTRA to reach the same accuracy. In our experiment, DIGing-ATC appears to be sensitive to networks. 
Under a better connected network (see Fig.~\ref{fig:compare_c_noR} bottom), DIGing-ATC can catch up with NIDS with $c={1/(2\alpha)}$. 
The theoretical step-size of Acc-DNGD-SC is too small due to a very small constant in the bound in~\cite{qu2017accelerated}, and the convergence of Acc-DNGD-DC under such theoretical step-size in our test is slow and uncompetitive. 
Thus we have carefully tuned its step-size. 
With the hand-optimized step-size, Acc-DNGD-SC can achieve a comparable performance as NIDS with $c={1/(2\alpha)}$. 
In the plots, we observe that OA is fast in terms of the number of iterations. However, in this case, the per-iteration cost of OA is relatively high since it requires solving a system of linear equations at each iteration (though factorization tricks may be used to save some computational time). 
\begin{figure}[!ht]
	\begin{center}
		\includegraphics[width=0.4\textwidth]{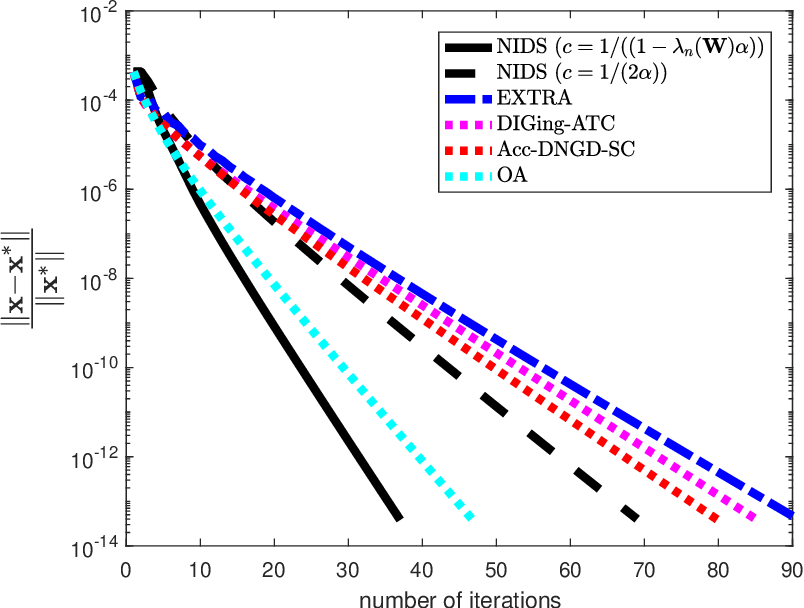}
\includegraphics[width=0.4\textwidth]{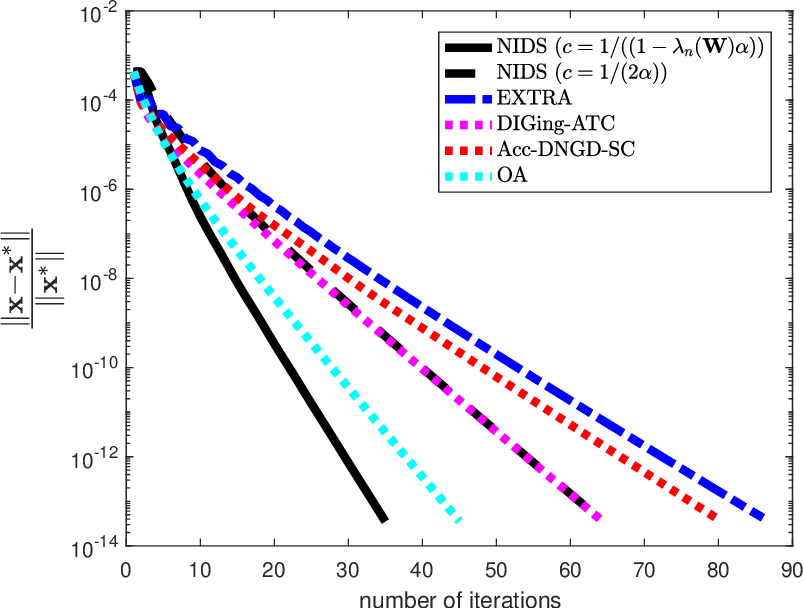}
	\end{center}
	\caption{\label{fig:compare_c_noR} Relative error $\frac{\left\| \vx-\vx^{*}\right\|}{\left\| \vx^{*}\right\|}$ against the number of iterations for two different networks (top: $\tau=0.35$; bottom: $\tau=0.45$). NIDS, EXTRA, and DIGing-ATC use the same step-size $\alpha=1 /{L}$, where $L =\max_iL_i$. 
    The step-size for Acc-DNGD-SC is hand-optimized. 
We use the default step-sizes for OA as suggested by the authors.}
\end{figure}


Next, we demonstrate the effort of uncoordinated/adaptive step-size. 
We construct the function with $\mu_i=0.02$ and $L_i=1$ for each node $i$. 
Then we change the $L_i$ values by multiplying the function by a constant.
%
\begin{figure}[!ht]
	\begin{center}
		\includegraphics[width=0.4\textwidth]{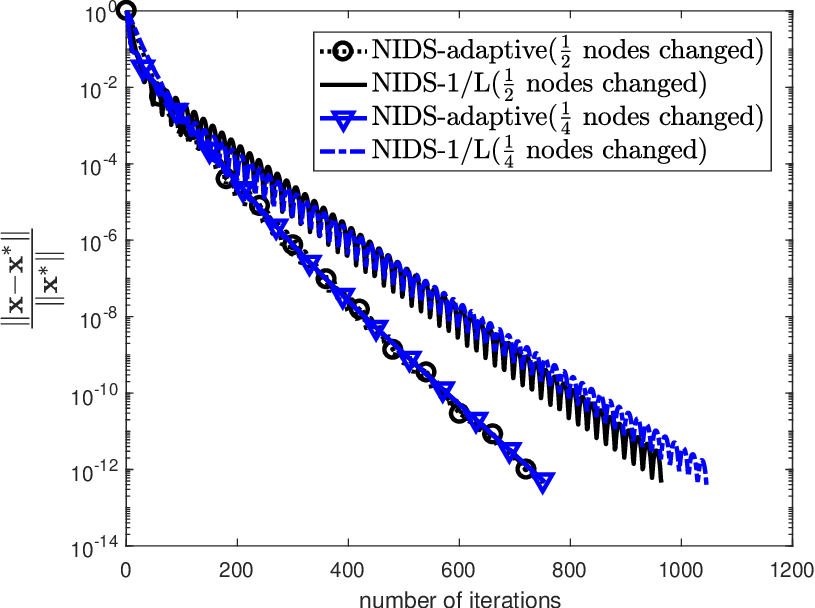}
	\end{center}
	\caption{\label{fig:compare_adpt_noR} The relative error $\frac{\left\| \vx-\vx^{*}\right\|}{\left\| \vx^{*}\right\|}$ against the number of iterations. NIDS-1/L uses the same step-size $1 /{L}$, where $L =\max_iL_i$, and NIDS-adaptive uses the step-size $1 / {L_i}$ for each node.  We assume that no graph information is available, thus $c=1/(2 \max_i \alpha_i)$.
     The connectivity ratio of the network is set as $\tau=0.1$.}
\end{figure}
We use the same mixing matrix for the following two experiments. 
\begin{itemize}
	\item We change half nodes. 
	We randomly pick an even number node and multiply its function by $4$. 
	For remaining even number nodes, we multiply their functions by a random integer 2 or 3.
	\item We change a quarter nodes.  
	We randomly pick an node not in $\cU=\{4,8,16,\dots,40\}$ and multiply its function by 10. 
	Then for other nodes in $\cU$, we multiply their functions by a random integer between 2 and 9.
\end{itemize} 

We compare NIDS with adaptive step-size ($1 / {L_i}$ for node $i$) and NIDS with same step-size $1 / {\max_iL_i}$ in Fig.~\ref{fig:compare_adpt_noR}. 
We let $c=1/(2 \max_i \alpha_i)$, so no network information is needed.
As shown in Fig.~\ref{fig:compare_adpt_noR}, NIDS with adaptive step-size converges faster than same step-size. 

\subsection{The case with nonsmooth function $r(\vx)$}
In this subsection, we compare the performance of NIDS with PG-EXTRA~\cite{shi2015proximal} only since other methods in Section~\ref{sec:num1}, such as DIGing, can not be applied to this nonsmooth case.
We consider a decentralized compressed sensing problem. 
Again, each agent $i \in \{1,\cdots,n\}$ takes its own measurement via $y_i=\vM_ix+e_i$, where $y_i \in \RR^{m_i}$ is the measurement vector, $\vM_i \in \RR^{m_i \times p} $ is the sensing matrix, and $e_i \in \RR^{m_i} $ is the independent and identically distributed noise. 
Here, $x$ is a sparse signal. 
The optimization problem is
\begin{align*}
\Min_x~
&\frac{1}{n}\sum_{i=1}^{n}{1\over2}\|\vM_ix-y_i\|^2+\frac{1}{n}\sum_{i=1}^{n}\lambda_i\|x\|_1, 
\end{align*}
where the connectivity ratio of the network $\tau=0.1$. We normalize the problem to make sure that the Lipschitz constant satisfies $L_i=1$ for each node, we choose $m_i=3$ and $p=200$ and set the number of nodes $n=40$. 

Fig.~\ref{fig:mulStep_R} shows that a larger step-size in NIDS leads to faster convergence. With step-size $1$, NIDS and PG-EXTRA converge at the same speed. But if we keep increasing the step-size, PG-EXTRA will diverge with step-size $1.4$ while the step-size of NIDS can be increased to $1.9$ maintaining convergence at a faster speed.

\begin{figure}[!ht]
	\begin{center}
	\vspace{-1em}
	\includegraphics[width=0.4\textwidth]{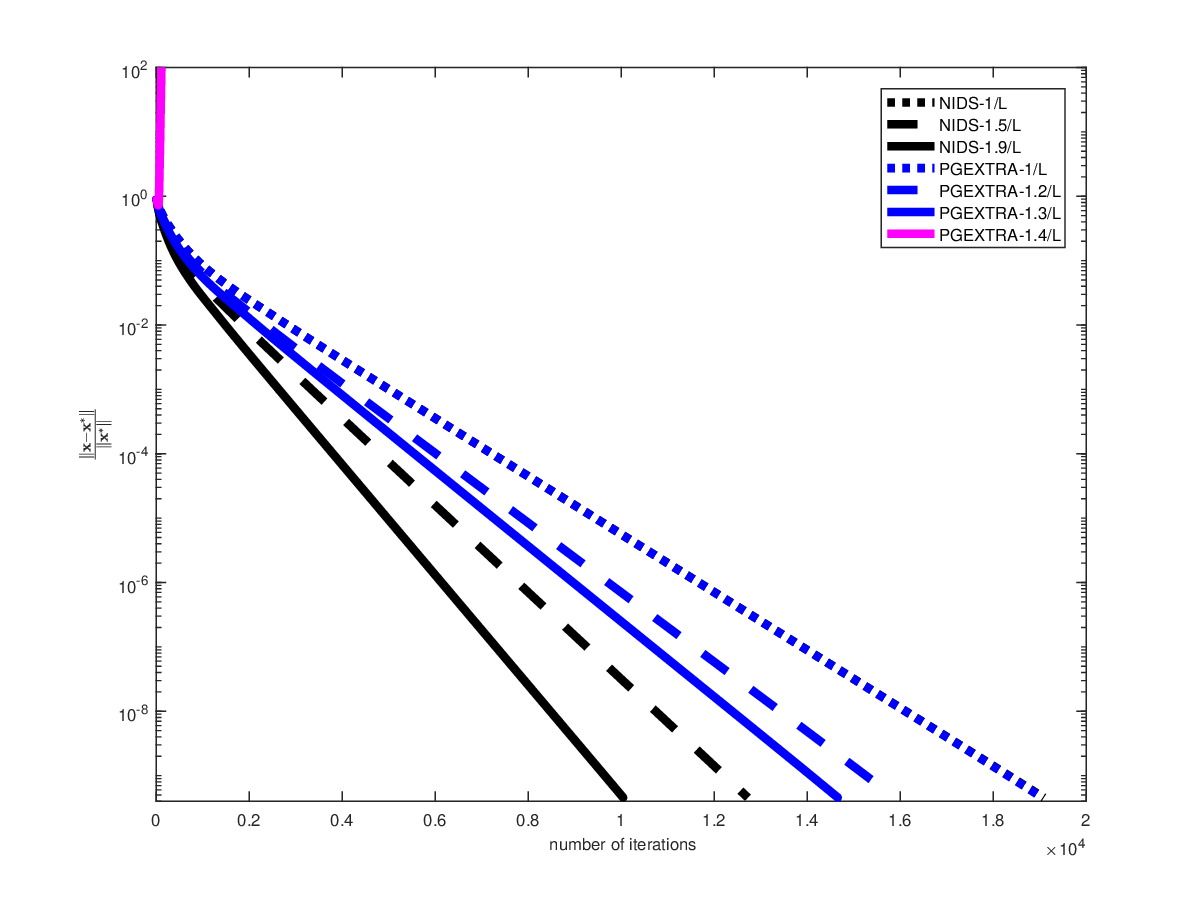}
	\end{center}
	\caption{\label{fig:mulStep_R}  The relative error $\frac{\left\| \vx-\vx^{*}\right\|}{\left\| \vx^{*}\right\|}$ against the number of iterations. Different step-sizes for PG-EXTRA and NIDS are considered. 
	For instance, ``NIDS-$1/{L}$'' is NIDS using the same step-size $1 /{L}$ across the network of agents, where $L =\max_iL_i$. The connectivity ratio of the network is $\tau=0.4$.}
\end{figure}

\subsection{An application in classification for healthcare data}

\begin{figure*}[!ht]
	\begin{center}
		\includegraphics[width=0.28\textwidth]{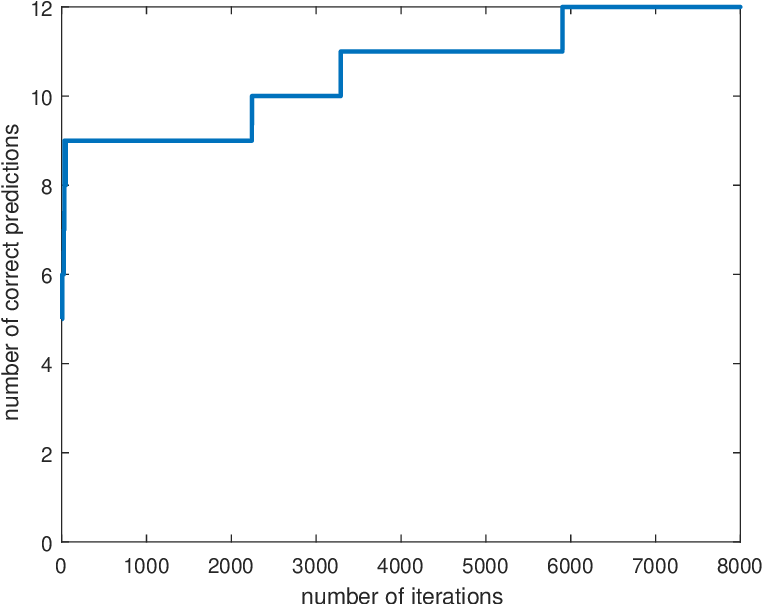}
 		\includegraphics[width=0.28\textwidth]{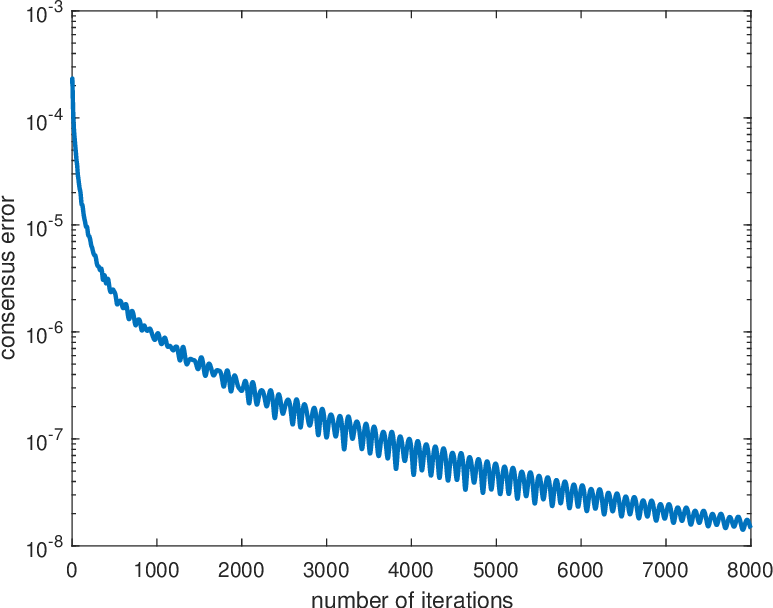}
		\includegraphics[width=0.28\textwidth]{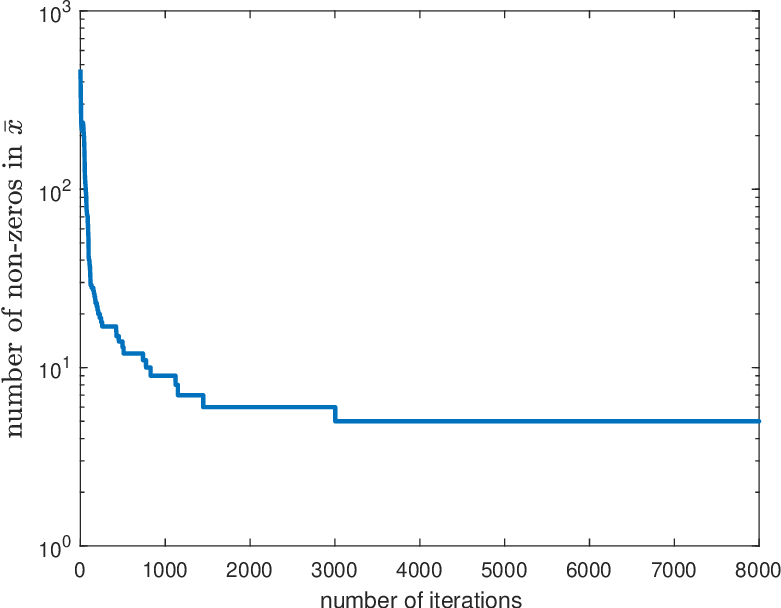}
	\end{center}
	\caption{\label{fig:logistic_reg} {Performance of NIDS for sparse logistic regression. Left: number of correct predictions vs. iteration. 
	Middle: consensus error $\left\|\vx^k\right\|_{I-\vW}$ vs. iteration; Right: number of non-zero elements in $\bar{x}^k={1\over50}\sum_{i=1}^{50}x_i^k$ vs. iteration.}}
\end{figure*}

We consider a decentralized sparse logistic regression problem to classify the colon-cancer data~\cite{liu2009large}. 
There are 62 samples, and each sample features 2,000 pieces of gene expressing information (numericalized and normalized~\cite{liu2009large}) and a binary outcome. 
The outcome can be normal/negative ($+1$) or tumor/positive ($-1$) and the data set contains $22$ normal and $40$ tumor colon tissue samples. 
We store the gene information in $\vM_i\in\RR^{1\times 2001}$ (one more dimension is augmented to take care of the linear offset/constant in the logit function model), and the outcome information is $y_i\in\{-1,1\}$, $i\in\mathcal{S}_1\bigcup\mathcal{S}_2$, where $\mathcal{S}_1$ serves for training while $\mathcal{S}_2$ serves for testing. 
Suppose we have a 50-node connected network where each node $i$ holds $1$ sample $(\vM_i,y_i)$ (the connected network is randomly generated and its connectivity ratio is set to $0.08$; the 50 in-network samples indexed by $\mathcal{S}_1$ are randomly drawn from the 62 samples). 
The decentralized logistic regression

\begin{align*}
\Min_x~
&\frac{1}{|\mathcal{S}_1|}\sum_{i\in\mathcal{S}_1}\ln(1+\exp(-\vM_i x_i y_i))\\
&+\frac{1}{|\mathcal{S}_1|}\sum_{i\in\mathcal{S}_1}\hat{\lambda}_i\|x_i\|_2^2+\frac{1}{|\mathcal{S}_1|}\sum_{i\in\mathcal{S}_1}\lambda_i\|x_i\|_1,
\end{align*}
is then solved over the network to train a sparse linear classifier $\vx^*$ for the outcome prediction of the remaining/future samples/data. 
In the optimization formulation, the $\ell_2$ norm term imposes strong convexity to $s$, while $\ell_1$ term promotes sparsity of the solution. 
Aside from the 50 samples used for training purpose, we randomly select 12 nodes from the 50 nodes to show the prediction performance of the remaining 12 samples in Fig.~\ref{fig:logistic_reg} left. 
The middle and right figures in Fig.~\ref{fig:logistic_reg} show how the consensus error $\left\|\vx^k\right\|_{I-\vW}$ and the sparsity of the average solution vector ${1\over 50}\sum_{i=1}^{50} x_i^k$ drops, respectively.


\section{Conclusion}\label{sec:concl}
We proposed a novel decentralized consensus algorithm NIDS, whose step-size does not depend on the network structure. 
In NIDS, the step-size depends {\it only} on the objective function, and it can be as large as $2/L$, where $L$ is the Lipschitz constant of the gradient of the smooth function. 
We showed that NIDS converges at the $o(1/k)$ rate for the general convex case and at a linear rate for the strongly convex case. 
For the strongly convex case, we separated the linear convergence rate's dependence on the objective function and the network. 
The separated convergence rates match the typical rates for the general gradient descent and the consensus averaging. 
Furthermore, every agent in the network can choose its own step-size independently by its own objective function. 
Numerical experiments validated the theoretical results and demonstrated better performance of NIDS over state-of-the-art algorithms.
Because the step-size of NIDS does not depend on the network structure, there are many possible future extensions. 
One extension is to apply NIDS on dynamic networks where nodes can join and drop off.

\section*{Acknowledgement}
We thank the anonymous reviewers for helpful comments and suggestions to improve the clarity of this paper.


\bibliographystyle{IEEEtran}
\bibliography{nids_bib}


\begin{IEEEbiography}
[{\includegraphics[width=1in,height=1.25in,clip,keepaspectratio]{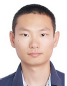}}]
{Zhi Li} received the B.S. and the M.S. degree in Applied Mathematics from China University of Petroleum, Shandong, China, in 2007 and 2010, respectively. Then, he participated in the cooperative education program and
also received the M.S. degree in Applied Science from Saint Mary’s University, Halifax, NS, Canada, in 2012. After being awarded the Hong Kong Ph.D. Fellowship, he went to Hong Kong Baptist University, HK,
China, where he received his Ph.D. in Applied Mathematics in 2016, and he was supported by the fellowship from 2013 to 2016. Since 2016 he has been a postdoctoral
researcher with the Department of Computational Mathematics, Science and Engineering, Michigan State University, East Lansing, MI.
\end{IEEEbiography}

\begin{IEEEbiography}
	[{\includegraphics[width=1in,height=1.25in,clip,keepaspectratio]{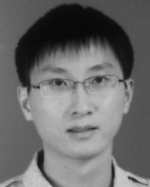}}]
    {Wei Shi} received the B.E. degree in automation and the Ph.D. degree in control science and engineering from the University of Science and Technology of China, Hefei, in 2010 and 2015, respectively. 
    He was a Postdoctoral Researcher with the Coordinated Science Laboratory, the University of Illinois at Urbana-Champaign, Urbana, IL, USA, from 2015 to 2016, with Arizona State University, Tempe, AZ, USA from 2016 to 2018, and with Princeton University from 2018-2019. His research interests spanned in optimization, cyber–physical systems, and big data analytics. He was awarded the 2017 Young Author Best Paper Award from the IEEE Signal Processing Society.
\end{IEEEbiography}
\begin{IEEEbiography}
	[{\includegraphics[width=1in,height=1.25in,clip,keepaspectratio]{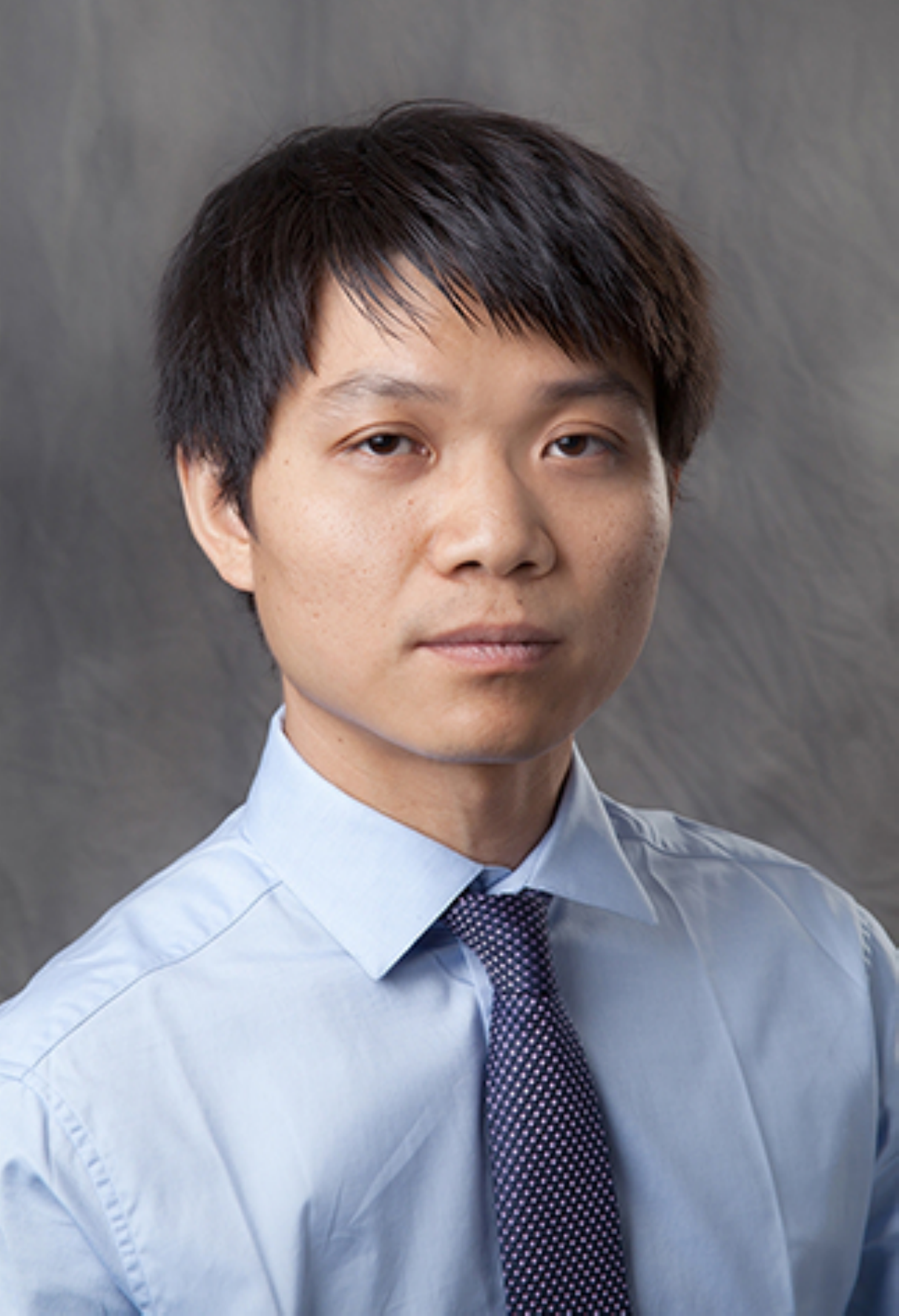}}]
	{Ming Yan} is currently an Assistant Professor in the Department of Computational Mathematics, Science and Engineering and the Department of Mathematics, Michigan State University. He received the B.S. Degree and M.S. Degree from University of Science and Technology of China, and Ph.D. degree from University of California, Los Angeles (UCLA) in 2012. His research interests include optimization methods and their applications in sparse recovery and regularized inverse problems, variational methods for image processing, parallel and distributed algorithms for solving big data problems.
\end{IEEEbiography}





\newpage

\markboth{L. Zhi, W. Shi, and M. Yan}{}
\setcounter{page}{1}
\section*{Supplementary Material for ``A Decentralized Proximal-Gradient Methodwith Network Independent Step-sizes andSeparated Convergence Rates''}
\subsection{Proof of Lemma~\ref{lemma:fixedpoint}}
\setcounter{lemma}{0}

\begin{lemma}[Fixed point of~\eqref{for:alg2}]
$(\vd^*,\vz^*)$ is a fixed point of~\eqref{for:alg2} if and only if there exists a subgradient $\vq^*\in\partial r(\vx^*)$ such that $\vz^*=\vx^*+\Lambda\vq^*$ and
\begin{subequations}
\begin{align*}
&\vd^* + \nabla s(\vx^*)+\vq^* & = \vzero ,\\
&(\vI-\vW)\vx^* &=\vzero.
\end{align*}
\end{subequations}
\end{lemma}

\begin{IEEEproof}
``$\Rightarrow$" 
If $(\vd^*,\vz^*)$ is a fixed point of~\eqref{for:alg2}, we have 
\begin{align*}
\vzero &=  c(\vI-\vW)\left(2\vx^{*}-\vz^{*}-\Lambda\nabla s\left(\vx^{*}\right)-\Lambda\vd^*\right)\\
&=  c(\vI-\vW)\vx^{*},
\end{align*}
where the two equalities come from~\eqref{for:alg2_d} and~\eqref{for:alg2_z}, respectively.
Combining~\eqref{for:alg2_z} and~\eqref{for:alg2_x} gives
\begin{align*}
\vzero &= \vz^* - \vx^{*}+\Lambda\nabla s(\vx^{*})+\Lambda\vd^{*} \\
&= \Lambda (\vq^*+\nabla s(\vx^{*})+\vd^*),
\end{align*}
where $\vq^*\in\partial r(\vx^*)$.

``$\Leftarrow$"
In order to show that $(\vd^*,\vz^*)$ is a fixed point of iteration~\eqref{for:alg2}, we just need to verify that $(\vd^{k+1},\vz^{k+1})=(\vd^*,\vz^*)$ if $(\vd^k,\vz^k)=(\vd^*,\vz^*)$. 
From~\eqref{for:alg2_x}, we have $\vx^k=\vx^*$, then
\begin{align*}
\vd^{k+1} &=  \vd^* + c(\vI-\vW)\left(2\vx^*-\vz^*-\Lambda\nabla s\left(\vx^*\right)-\Lambda\vd^*\right)\\
&=  \vd^* + c(\vI-\vW)\vx^*=\vd^*,\\
\vz^{k+1} &=  \vx^*-\Lambda\nabla s\left(\vx^*\right)-\Lambda\vd^{*} =\vx^*+\Lambda\vq^*=\vz^*. 
\end{align*}
Therefore, $(\vd^*,\vz^*)$ is a fixed point of iteration~\eqref{for:alg2}.
\end{IEEEproof} 

\subsection{Proof of Lemma~\ref{lemma:condition}}
\begin{lemma}[Optimality condition]
$\vx^*$ is consensual with $x^*_1=x^*_2=\cdots=x^*_n=x^*$ being an optimal solution of problem~\eqref{eq:F} if and only if there exists $\vp^*$ and a subgradient $\vq^*\in\partial r(\vx^*)$  such that:
\begin{subequations}
\begin{align}
&(\vI-\vW)\vp^* + \nabla s(\vx^*)+\vq^* &= \vzero ,\label{for:opt1_sm}\\
&(\vI-\vW)\vx^*&=\vzero .\label{for:opt2_sm}
\end{align}
\end{subequations}
In addition, $(\vd^*=(\vI-\vW)\vp^*, \vz^*=\vx^*+\Lambda\vq^*)$ is a fixed point of iteration~\eqref{for:alg2}.
\end{lemma}
\begin{IEEEproof}
``$\Rightarrow$" 
Because $\vx^*=\vone_{n\times 1} (x^*)^\top$,  we have $(\vI-\vW)\vx^*=(\vI-\vW)\vone_{n\times 1} (x^*)^\top=\vzero_{n\times 1}(x^*)^\top=\vzero$. 
The fact that $x^*$ is an optimal solution of problem~\eqref{eq:F} means there exists $\vq^*\in\partial r(\vx^*)$ such that $(\nabla s(\vx^*)+\vq^*)^\top\vone_{n\times 1} = \vzero$. 
That is to say all columns of $\nabla s(\vx^*)+\vq^*$ are orthogonal to $\vone_{n\times 1}$.
Therefore, Remark~\ref{remark:W} shows the existence of $\vp^*$ such that $(\vI-\vW)\vp^*+\nabla s(\vx^*)+\vq^*=\vzero$.

``$\Leftarrow$"
Equation~\eqref{for:opt2_sm} shows that $\vx^*$ is consensual because of item 3 of Assumption~\ref{assu:1}, i.e., $\vx^*=\vone_{n\times 1}(x^*)^\top$ for some $x^*$.
From~\eqref{for:opt1_sm}, we have $\vzero =((\vI-\vW)\vp^* + \nabla s(\vx^*)+\vq^*)^\top \vone_{n\times 1}=(\vp^*)^\top(\vI-\vW) \vone_{n\times 1}+(\nabla s(\vx^*)+\vq^*)^\top \vone_{n\times 1}=(\nabla s(\vx^*)+\vq^*)^\top \vone_{n\times 1}$. 
Thus, $\vzero\in \sum_{i=1}^n(\nabla s_i(x^*)+\partial r_i(x^*))$ because $\vx^*$ is consensual. This completes the proof for the equivalence.

Lemma~\ref{lemma:fixedpoint} shows that $(\vd^*=(\vI-\vW)\vp^*, \vz^*=\vx^*+\Lambda\vq^*)$ is a fixed point of iteration~\eqref{for:alg2}.
\end{IEEEproof}

\subsection{Proof of Lemma~\ref{lemma:equiv_norm}}
\begin{lemma}[Norm over range space]
For any symmetric positive semidefinite matrix $\vA\in \RR^{n\times n}$ with rank $r\leq n$, 
let $\lambda_1\geq \lambda_2\geq \dots\geq \lambda_r>0$ be its $r$ eigenvalues. 
Then $\range(\vA)$ defined in Section~\ref{sec:notation} is a $rp$-dimensional subspace in $\RR^{n\times p}$ and has a norm defined by $\|\vx\|^2_{\vA^\dagger}:=\langle \vx, \vA^\dagger \vx\rangle$, where $\vA^\dagger$ is the pseudo inverse of $\vA$. 
In addition, $\lambda_1^{-1}\|\vx\|^2\leq \|\vx\|^2_{\vA^\dagger}\leq \lambda_r^{-1}\|\vx\|^2$ for all $\vx\in\range(\vA)$.
\end{lemma}
\begin{IEEEproof} 
Let $\vA=\vU\Sigma \vU^\top$, where $\Sigma=\Diag(\lambda_1,\lambda_2,\cdots,\lambda_r)$ and the columns of $\vU$ are orthonormal eigenvectors for corresponding eigenvalues, i.e., $\vU\in \RR^{n\times r}$ and $\vU^\top \vU=\vI_{r\times r}$.
Then $\vA^\dagger=\vU\Sigma^{-1}\vU^\top$, where $\Sigma^{-1} = \Diag(\lambda_1^{-1},\lambda_2^{-1},\cdots,\lambda_r^{-1})$. 

Letting $\vx=\vA \vy$, we have $\|\vx\|^2 = \langle \vU\Sigma \vU^\top \vy, \vU\Sigma \vU^\top \vy\rangle = \langle \Sigma \vU^\top \vy, \Sigma \vU^\top \vy\rangle = \|\Sigma \vU^\top \vy\|^2$. In addition, 
\begin{align*}
\langle \vx, \vA^\dagger \vx\rangle=&\langle \vA \vy, \vA^\dagger \vA \vy\rangle \\= & \langle \vU\Sigma \vU^\top \vy, \vU\Sigma^{-1}\vU^\top \vU\Sigma \vU^\top \vy\rangle \\= &\langle \Sigma \vU^\top \vy, \Sigma^{-1}\Sigma \vU^\top \vy\rangle.
\end{align*}
Therefore, 
\begin{align}\label{for:equiv_norm}
\lambda^{-1}_1 \|\vx\|^2 &=\lambda^{-1}_1 \|\Sigma \vU^\top \vy\|^2 \\& \leq \langle \vx, \vA^\dagger \vx\rangle \leq \lambda^{-1}_r \|\Sigma \vU^\top \vy\|^2 = \lambda^{-1}_r \|\vx\|^2, \nonumber
\end{align}
which means that $\|\cdot\|^2_{\vA^\dagger}=\langle \cdot, \vA^\dagger\cdot\rangle$ is a norm for $\range(\vA)$.
\end{IEEEproof}

\subsection{Proof of Proposition~\ref{prop:M} }
\setcounter{proposition}{0}
\begin{proposition} 
Let $\vM=c^{-1}(\vI-\vW)^\dagger-\Lambda$ with $\Lambda$ being symmetric positive definite and $\vI\succcurlyeq c\Lambda^{1/2}(\vI-\vW)\Lambda^{1/2} \succcurlyeq\vzero$. Then $\|\cdot\|_{\vM}$ is a norm defined for $\range(\vI-\vW)$.
\end{proposition}
\begin{IEEEproof}
Rewrite the matrix $\vM$ as 
\begin{align*}
\vM = & c^{-1}(\vI-\vW)^\dagger-\Lambda \\ 
= &\Lambda^{1/2}(c^{-1}\Lambda^{-1/2}(\vI-\vW)^\dagger\Lambda^{-1/2}-\vI)\Lambda^{1/2}\\
    = & \Lambda^{1/2}((c\Lambda^{1/2}(\vI-\vW)\Lambda^{1/2})^\dagger-\vI)\Lambda^{1/2}.
\end{align*}
For any $\vx\in\range(\vI-\vW)$, we can find $\vy\in\RR^{n\times p}$ such that $\vx=(\vI-\vW)\Lambda^{1/2}\vy$. 
Then 
\begin{align*}
&\langle \vx,\vM\vx\rangle \\= & \langle (\vI-\vW)\Lambda^{1/2}\vy,\Lambda^{1/2}((c\Lambda^{1/2}(\vI-\vW)\Lambda^{1/2})^\dagger\\
&-\vI)\Lambda^{1/2} (\vI-\vW)\Lambda^{1/2}\vy\rangle \\
 = & \langle \Lambda^{1/2}(\vI-\vW)\Lambda^{1/2}\vy,((c\Lambda^{1/2}(\vI-\vW)\Lambda^{1/2})^\dagger\\
&-\vI)\Lambda^{1/2} (\vI-\vW)\Lambda^{1/2}\vy\rangle.
\end{align*}
We apply Lemma~\ref{lemma:equiv_norm} on $\Lambda^{1/2}(\vI-\vW)\Lambda^{1/2}$ and obtain the result.
\end{IEEEproof}

\subsection{Proof of Theorem~\ref{lemma:conv_rate}}
Before proving Theorem~\ref{lemma:conv_rate}, we present two lemmas. 
The first lemma shows that the distance to a fixed point of~\eqref{for:alg2} is decreasing, and the second one show the distance between two iterations is decreasing.

\begin{lemma}[A key inequality of descent]\label{lemma:conv}
Let $(\vd^*,\vz^*)$ be a fixed point of~\eqref{for:alg2} and $\vd^*\in\range(\vI-\vW)$. 
For the sequence $(\vd^k,\vz^k)$  generated from NIDS in~\eqref{for:alg2} with $\vI\succcurlyeq c\Lambda^{1/2}(\vI-\vW)\Lambda^{1/2}$, 
we have
\begin{align}\label{for:powerIneq2}
&\|\vz^{k+1}-\vz^*\|^2_{\Lambda^{-1}}+\|\vd^{k+1}-\vd^*\|_\vM^2 \nonumber\\
\leq		& \|\vz^{k}-\vz^*\|^2_{\Lambda^{-1}}+\|\vd^{k}-\vd^*\|_\vM^2 \\
& -(1-\max_i{\alpha_iL_i\over 2})\left(\|\vz^{k}-\vz^{k+1}\|^2_{\Lambda^{-1}}\right. \nonumber\\ &\left.+\|\vd^{k}-\vd^{k+1}\|_\vM^2\right).\nonumber
\end{align}
\end{lemma}

\begin{IEEEproof}
Young's inequality and~\eqref{for:cocoer} give us 
\begin{align*}
		 & 2\langle\nabla s\left(\vx^{k}\right)-\nabla s\left(\vx^{*}\right),\vz^{k}-\vz^{k+1}\rangle \\ &-2\langle \vx^{k}-\vx^{*},\nabla s(\vx^{k})-\nabla s(\vx^{*})\rangle \nonumber\\
\leq & {1\over 2}\|\vz^{k}-\vz^{k+1}\|^{2}_\vL +2\|\nabla s(\vx^{k})-\nabla s(\vx^{*})\|^{2}_{\vL^{-1}}\\ &-2\|\nabla s(\vx^{k})-\nabla s(\vx^{*})\|^{2}_{\vL^{-1}} \nonumber\\
=    & \frac{1}{2}\|\vz^{k}-\vz^{k+1}\|^{2}_\vL.
\end{align*}
Therefore, from~\eqref{for:powerIneq}, we have
\begin{align*}
        & \|\vz^{k+1}-\vz^*\|^2_{\Lambda^{-1}}+\|\vd^{k+1}-\vd^*\|_\vM^2 \nonumber\\
\leq		& \|\vz^{k}-\vz^*\|^2_{\Lambda^{-1}}+\|\vd^{k}-\vd^*\|_\vM^2 -\|\vz^{k}-\vz^{k+1}\|^2_{\Lambda^{-1}}\\ 
&-\|\vd^{k}-\vd^{k+1}\|_\vM^2  +\frac{1}{2}\|\vz^{k}-\vz^{k+1}\|^{2}_\vL\nonumber\\
\leq    & \|\vz^{k}-\vz^*\|^2_{\Lambda^{-1}}+\|\vd^{k}-\vd^*\|_\vM^2 \\ 
-&(1-\max_i{\alpha_iL_i\over 2})\|\vz^{k}-\vz^{k+1}\|^2_{\Lambda^{-1}}-\|\vd^{k}-\vd^{k+1}\|_\vM^2  \nonumber\\
\leq    & \|\vz^{k}-\vz^*\|^2_{\Lambda^{-1}}+\|\vd^{k}-\vd^*\|_\vM^2 \\ 
-&(1-\max_i{\alpha_iL_i\over 2})\left(\|\vz^{k}-\vz^{k+1}\|^2_{\Lambda^{-1}}+\|\vd^{k}-\vd^{k+1}\|_\vM^2\right). \nonumber
\end{align*}
This completes the proof.
\end{IEEEproof}

\label{pf_lemma_nonincreasing}
\begin{lemma}[Monotonicity of successive difference in a special norm]\label{lemma:nonincreasing}
Let $(\vd^k,\vz^k)$ be the sequence generated from NIDS in~\eqref{for:alg2} with $\alpha_i< 2/L_i$ for all $i$ and $\vI\succcurlyeq c\Lambda^{1/2}(\vI-\vW)\Lambda^{1/2}$.  
Then the sequence $\left\{\|\vz^{k+1}-\vz^k\|^2_{\Lambda^{-1}}+\|\vd^{k+1}-\vd^{k}\|_{\vM}^{2}\right\} _{k\geq0}$ 
is monotonically nonincreasing.
\end{lemma}

\begin{IEEEproof}
Similar to the proof for Lemma~\ref{lemma:powerIneq}, we can show that 
\begin{align}
\begin{split}\label{for:nonincreasing1}
&\langle \vd^{k+1}-\vd^{k},\vz^{k+1}-\vz^{k}+\vx^{k}\rangle\\
&\quad= \langle \vd^{k+1}-\vd^{k},\vd^{k+1}-\vd^{k}\rangle_{\vM},
\end{split}\\
\begin{split} \label{for:nonincreasing2} 
&\langle \vd^{k+1}-\vd^{k},\vz^{k}-\vz^{k-1}+\vx^{k-1}\rangle\\
&\quad= \langle \vd^{k+1}-\vd^{k},\vd^{k}-\vd^{k-1}\rangle_{\vM},
\end{split}  \\
&\langle \vx^{k}-\vx^{k-1},\vz^{k}-\vx^{k}-\vz^{k-1}+\vx^{k-1}\rangle_{\Lambda^{-1}}\geq 0. \label{for:nonincreasing3}
\end{align}
Subtracting~\eqref{for:nonincreasing2} from~\eqref{for:nonincreasing1} on both sides, we have 
\begin{align}
     & \langle \vd^{k+1}-\vd^{k},\vx^{k}-\vx^{k-1}\rangle\nonumber\\
=    & \left\| \vd^{k+1}-\vd^{k}\right\|^2_{\vM}-\langle \vd^{k+1}-\vd^{k},\vd^{k}-\vd^{k-1}\rangle_{\vM}\nonumber\\
&+\langle \vd^{k+1}-\vd^{k},2\vz^{k}-\vz^{k-1}-\vz^{k+1}\rangle\nonumber\\
\geq & \left\| \vd^{k+1}-\vd^{k}\right\|^2_{\vM}-{1\over2}\| \vd^{k+1}-\vd^{k}\|_\vM^2-{1\over 2}\|\vd^{k}-\vd^{k-1}\|^2_{\vM}\nonumber\\
     & +\langle \vd^{k+1}-\vd^{k},2\vz^{k}-\vz^{k-1}-\vz^{k+1}\rangle\nonumber\\
= & {1\over2}\| \vd^{k+1}-\vd^{k}\|_\vM^2-{1\over 2}\|\vd^{k}-\vd^{k-1}\|^2_{\vM}\nonumber\\
&+\langle \vd^{k+1}-\vd^{k},2\vz^{k}-\vz^{k-1}-\vz^{k+1}\rangle,  \label{nonincreasing4}
\end{align}
where the inequality comes from the Cauchy-Schwarz inequality.
Then, the previous inequality, together with~\eqref{for:nonincreasing3} and the Cauchy-Schwarz inequality, gives 
\begin{align*}
        & \langle \vx^{k}-\vx^{k-1},\nabla s(\vx^{k})-\nabla s(\vx^{k-1})\rangle \nonumber\\
\leq	  & \left\langle \vx^{k}-\vx^{k-1},{\Lambda^{-1}}(\vz^{k}-\vx^{k}-\vz^{k-1}+\vx^{k-1}) \right.\nonumber\\
&\left.+\nabla s(\vx^{k})-\nabla s(\vx^{k-1})\right\rangle \nonumber\\
=       & \langle \vx^{k}-\vx^{k-1},{\Lambda^{-1}}(\vz^{k}-\vz^{k+1}-\vz^{k-1}+\vz^{k})\nonumber\\&-\vd^{k+1}+\vd^{k}\rangle
\nonumber\\
\leq    & \langle \vx^{k}-\vx^{k-1},\vz^{k}-\vz^{k+1}-\vz^{k-1}+\vz^{k}\rangle_{\Lambda^{-1}}\nonumber\\
&-\langle \vd^{k+1}-\vd^{k},2\vz^{k}-\vz^{k-1}-\vz^{k+1}\rangle
\nonumber\\
 		    & -{1\over2}\left\| \vd^{k+1}-\vd^{k}\right\|^2_{\vM}+{1\over2}\| \vd^{k}-\vd^{k-1}\|^2_{\vM} \nonumber\\
=       &  \left\langle {\Lambda^{-1}}(\vx^{k}-\vx^{k-1})-\vd^{k+1}+\vd^{k},\right.\nonumber\\
&\left.\vz^{k}-\vz^{k+1}-\vz^{k-1}+\vz^{k}\right\rangle
\nonumber\\
 	 & -{1\over2}\left\| \vd^{k+1}-\vd^{k}\right\|^2_{\vM}+{1\over2}\| \vd^{k}-\vd^{k-1}\|^2_{\vM} \nonumber\\
 	 \end{align*}
 and consequently
\begin{align*}
        & \langle \vx^{k}-\vx^{k-1},\nabla s(\vx^{k})-\nabla s(\vx^{k-1})\rangle \nonumber\\
\leq    & \langle {\Lambda^{-1}}(\vz^{k+1}-\vz^{k})+\nabla s\left(\vx^{k}\right)-\nabla s\left(\vx^{k-1}\right),\nonumber\\
&\vz^{k}-\vz^{k+1}-\vz^{k-1}+\vz^{k}\rangle \nonumber\\
        &  -\frac{1}{2}\left\| \vd^{k+1}-\vd^{k}\right\| _{\vM}^{2}+\frac{1}{2}\left\| \vd^{k}-\vd^{k-1}\right\| _{\vM}^{2} \nonumber\\	
\leq    & \langle \vz^{k+1}-\vz^{k},\vz^{k}-\vz^{k+1}-\vz^{k-1}+\vz^{k}\rangle_{\Lambda^{-1}}\nonumber\\
&+\frac{1}{2}\left\| \vz^{k}-\vz^{k+1}-\vz^{k-1}+\vz^{k}\right\| ^{2}_{\Lambda^{-1}} \nonumber\\
        &  +\frac{1}{2}\left\| \nabla s\left(\vx^{k}\right)-\nabla s\left(\vx^{k-1}\right)\right\|^{2}_{\Lambda} \nonumber\\
&-\frac{1}{2}\left\| \vd^{k+1}-\vd^{k}\right\| _{\vM}^{2}+\frac{1}{2}\left\| \vd^{k}-\vd^{k-1}\right\| _{\vM}^{2} \nonumber\\	
=       &  \frac{1}{2}\left\| \vz^{k}-\vz^{k-1}\right\|_{\Lambda^{-1}}^{2}-\frac{1}{2}\left\| \vz^{k+1}-\vz^{k}\right\|^{2}_{\Lambda^{-1}}\nonumber\\
&+\frac{1}{2}\left\| \nabla s\left(\vx^{k}\right)-\nabla s\left(\vx^{k-1}\right)\right\|^{2}_{\Lambda} \nonumber\\
        &  -\frac{1}{2}\left\| \vd^{k+1}-\vd^{k}\right\| _{\vM}^{2}+\frac{1}{2}\left\| \vd^{k}-\vd^{k-1}\right\| _{\vM}^{2}.
\end{align*}
The three inequalities hold because of~\eqref{for:nonincreasing3},~\eqref{nonincreasing4}, and the Cauchy-Schwarz inequality, respectively. 
The first and third equalities come from~\eqref{for:alg2_z}.
Rearranging the previous inequality, we obtain
\begin{align*}
      & \left\| \vz^{k+1}-\vz^{k}\right\|^{2}_{\Lambda^{-1}} +\left\| \vd^{k+1}-\vd^{k}\right\|_{\vM}^{2} \nonumber\\
\leq	& \left\| \vz^{k}-\vz^{k-1}\right\|^{2}_{\Lambda^{-1}}+\left\| \vd^{k}-\vd^{k-1}\right\| _{\vM}^{2}\nonumber\\
&+\frac{1}{2}\left\| \nabla s\left(\vx^{k}\right)-\nabla s\left(\vx^{k-1}\right)\right\|^{2}_{\Lambda}\\
      & -\langle \vx^{k}-\vx^{k-1},\nabla s(\vx^{k})-\nabla s(\vx^{k-1})\rangle \nonumber\\
\leq  & \left\| \vz^{k}-\vz^{k-1}\right\|^{2}_{\Lambda^{-1}}+\left\| \vd^{k}-\vd^{k-1}\right\| _{\vM}^{2}\nonumber\\
&+\frac{1}{2}\left\| \nabla s\left(\vx^{k}\right)-\nabla s\left(\vx^{k-1}\right)\right\|^{2}_{\Lambda-2\vL^{-1}}\nonumber\\
\leq  & \left\| \vz^{k}-\vz^{k-1}\right\|^{2}_{\Lambda^{-1}}+\left\| \vd^{k}-\vd^{k-1}\right\| _{\vM}^{2}.
\end{align*}
where the second and last inequalities come from~\eqref{for:cocoer} and $\Lambda< 2\vL^{-1}$, respectively. It completes the proof.
\end{IEEEproof}


\setcounter{theorem}{0}
\begin{theorem}[Sublinear rate]
Let $(\vd^k,\vz^k)$ be the sequence generated from NIDS in~\eqref{for:alg2} with $\alpha_i< 2/L_i$ for all $i$ and $\vI\succcurlyeq c\Lambda^{1/2}(\vI-\vW)\Lambda^{1/2}$. 
We have
\begin{align}\label{conv:sublinear_sm}
\begin{split}
&\|\vz^{k}-\vz^{k+1}\|^2_{\Lambda^{-1}}+\|\vd^{k}-\vd^{k+1}\|_{\vM}^{2} \\
&\quad  \leq   \frac{\|\vz^{1}-\vz^{*}\|^2_{\Lambda^{-1}} +\|\vd^{1}-\vd^{*}\|_{\vM}^{2} }{k(1-\max_i{\alpha_iL_i\over 2})},\end{split}\\
&\|\vz^{k}-\vz^{k+1}\|^2_{\Lambda^{-1}}+\|\vd^{k}-\vd^{k+1}\|_{\vM}^{2} = & o\left(\frac{1}{k+1}\right).\nonumber
\end{align}
Furthermore, $(\vd^k,\vz^k)$ converges to a fixed point $(\bar\vd,\bar\vz)$ of iteration~\eqref{for:alg2} and $\bar\vd\in\range(\vI-\vW)$, if $\vI\succ c\Lambda^{1/2}(\vI-\vW)\Lambda^{1/2}$.
\end{theorem}

\begin{IEEEproof}
Lemma~\ref{lemma:nonincreasing} shows that  $ \left\{\|\vz^{k+1}-\vz^k\|^2_{\Lambda^{-1}}+\|\vd^{k+1}-\vd^{k}\|_{\vM}^{2}\right\} _{k\geq0}$ is monotonically nonincreasing. 
Summing up~\eqref{for:powerIneq2} from $1$ to $k$, we have  
\begin{align*}
&  \sum_{j=1}^{k}\left(\|\vz^{j}-\vz^{j+1}\|_{\Lambda^{-1}}^{2}+\|\vd^{j}-\vd^{j+1}\|_{\vM}^{2}\right)  \\
\leq &  {1\over (1-\max_{i}\frac{\alpha_{i}L_{i}}{2})}(\|\vz^{1}-\vz^{*}\|_{\Lambda^{-1}}^{2}+\|\vd^{1}-\vd^{*}\|_{\vM}^{2}\\
&-\|\vz^{k+1}-\vz^{*}\|_{\Lambda^{-1}}^{2}-\|\vd^{k+1}-\vd^{*}\|_{\vM}^{2}).
\end{align*}
Therefore, we have 
\begin{align*}
& \|\vz^{k}-\vz^{k+1}\|_{\Lambda^{-1}}^{2}+\|\vd^{k}-\vd^{k+1}\|_{\vM}^{2} \\
\leq   &{1\over k}\sum_{j=1}^{k}\left(\|\vz^{j}-\vz^{j+1}\|_{\Lambda^{-1}}^{2}+\|\vd^{j}-\vd^{j+1}\|_{\vM}^{2}\right)  \\
\leq &  {1\over k(1-\max_{i}\frac{\alpha_{i}L_{i}}{2})}(\|\vz^{1}-\vz^{*}\|_{\Lambda^{-1}}^{2}+\|\vd^{1}-\vd^{*}\|_{\vM}^{2}),
\end{align*}
and~\cite[Lemma 1]{Davis2016} gives us~\eqref{conv:sublinear_sm}.

When $\vI\succ c\Lambda^{1/2}(\vI-\vW)\Lambda^{1/2}$, inequality~\eqref{for:powerIneq2} shows that the sequence $(\vd^k,\vz^k)$ is bounded, and there exists a convergent subsequence $(\vd^{k_i}, \vz^{k_i})$ such that $(\vd^{k_i}, \vz^{k_i})\rightarrow (\bar\vd,\bar\vz)$. 
Then~\eqref{conv:sublinear_sm} gives the convergence of $(\vd^{k_i+1}, \vz^{k_i+1})$. More specifically, $(\vd^{k_i+1}, \vz^{k_i+1})\rightarrow (\bar\vd,\bar\vz)$. 
Therefore $(\bar\vd,\bar\vz)$ is a fixed point of iteration~\eqref{for:alg2}. 
In addition, because $\vd^k\in\range(\vI-\vW)$ for all $k$, we have $\bar\vd\in\range(\vI-\vW)$. 
Finally Lemma~\ref{lemma:conv} implies the convergence of $(\vd^k,\vz^k)$ to $(\bar\vd,\bar\vz)$.
\end{IEEEproof}

\ifCLASSOPTIONcaptionsoff
  \newpage
\fi

\end{document}